\numberwithin{equation}{section}
\newtheorem{cor}[equation]{Corollary}
\newtheorem{lemma}[equation]{Lemma}
\newtheorem{prop}[equation]{Proposition}
\newtheorem{theorem}[equation]{Theorem}
\theoremstyle{definition}
\newtheorem{defn}[equation]{Definition}
\newtheorem{exa}[equation]{Example}
\newtheorem{rem}[equation]{Remark}
\newtheorem*{acknowledge}{Acknowledgments}
\def\IN{\mathbb N}
\def\IR{\mathbb R}
\def\IZ{\mathbb Z}
\newcommand{\Ric}{\operatorname{Ric}}
\newcommand{\Scal}{\operatorname{Scal}}
\newcommand{\genus}{\operatorname{genus}}
\newcommand{\diam}{\operatorname{diam}}
\newcommand{\dist}{\operatorname{dist}}
\newcommand{\loc}{\operatorname{loc}}
\newcommand{\rk}{\operatorname{rank}}
\newcommand{\cL}{\mathcal{L}}
\newcommand{\cS}{\mathcal{S}}
\newcommand{\sys}{\operatorname{sys}}
\newcommand{\area}{\operatorname{area}}
\newcommand{\length}{\operatorname{length}}
\newcommand{\ulsc}{\operatorname{ulsc}}
\newcommand{\IRP}{\mathbb{R}\mathrm{P}}
\newcommand{\R}{\mathbb{R}}
\begin{document}

\title[Large genus minimal surfaces in positive Ricci curvature]{The systole of large genus minimal surfaces in positive Ricci curvature}

\author{Henrik Matthiesen}
\address
{HM: Max Planck Institute for Mathematics,
Vivatsgasse 7, 53111 Bonn
\newline
current adress: Department of Mathematics, University of Chicago,
5734 S. University Ave, Chicago, Illinois 60637}
\email{hmatthiesen@math.uchicago.edu}
\author{Anna Siffert}
\address{Mathematisches Institut\\
Einsteinstr. 62\\
48149 M\" unster\\
Germany}
\email{ASiffert@uni-muenster.de}

\begin{abstract} 
We use Colding--Minicozzi lamination theory to show that the systole, and more generally any homology systole, of a sequence of embedded minimal surfaces in an ambient three-manifold of positive Ricci curvature tends to zero as the genus becomes unbounded.
\end{abstract}

\maketitle

\section{Introduction}

In 1985, Choi and Schoen \cite{CS} proved that the space of compact, embedded minimal surfaces with bounded genus in a closed ambient three-manifold $M$ of positive Ricci curvature 
is compact in the $C^\ell$-topology for any $\ell\geq 2$.
Conversely, in the present paper we study properties of minimal surfaces in
such ambient manifolds if the genus becomes unbounded.

\smallskip

Our main result shows that the systole of such a sequence of minimal surfaces tends to zero.
Recall that the \emph{systole} of a closed surface $\Sigma\subset M$ is defined to be
\begin{equation*} 
\sys(\Sigma):= \inf \{\length(c) \ \lvert \ c \colon S^1 \to \Sigma \ \text{non-contractible} \}.
\end{equation*}
Note that this takes into account all curves that do not bound a disk in $\Sigma$.
Similarly, the \emph{homology systole} is given by
\begin{equation*} 
\sys^h(\Sigma):= \inf \{\length(c) \ \lvert \ 0 \neq [c] \in H_1(\Sigma; \IZ/2\IZ) \},
\end{equation*}
taking into account only curves that are not a boundary in $\Sigma$.
Clearly, we have 
\begin{equation*}
\sys(\Sigma) \leq \sys^h(\Sigma).
\end{equation*}
More generally, for $k\in \IN^*$, let us define the \emph{$k$-th homology systole} by
\begin{equation*} 
\sys^h_k(\Sigma):= \inf \{ \max_{i=1,\dots,k} \length(c_i) \\ \lvert \ \rk(\langle c_1,\dots c_k \rangle ) = k \},
\end{equation*}
where the span $\langle c_1,\dots ,c_k \rangle$ is taken in $H_1(\Sigma;\IZ/2\IZ)$.\\
We use $\IZ/2\IZ$-coefficients here to deal with orientable and non-orientable surfaces simultaneously.
Of course, for orientable surfaces we can equivalently use $\IZ$-coefficients.

\smallskip

We can now state our main result.

\begin{theorem} \label{thm_intrinsic}
Assume that $(M,g)$ is a three-manifold with positive Ricci curvature.
Let $k \in \IN^*$ and consider a sequence $(\Sigma_j)_{j \in \IN}$ of closed, embedded minimal surfaces in $M$ with $\chi(\Sigma_j) \to  -\infty$ as $j \to \infty$.
Then the $k$-th homology systole satisfies
\begin{equation*}
\sys^h_k(\Sigma_j) \to 0,
\end{equation*}
as $j \to \infty$.
\end{theorem}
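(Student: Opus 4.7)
The plan is a contradiction argument: suppose, after passing to a subsequence, that $\sys^h_k(\Sigma_j) \geq \delta > 0$ for all $j$, and derive a uniform upper bound on $|\chi(\Sigma_j)|$. The main tool is Colding--Minicozzi lamination theory for sequences of closed, embedded minimal surfaces in $M$. After extracting a further subsequence one obtains convergence to a minimal lamination $\cL$ of $M \setminus \cS$, where the singular set $\cS$ is the closed set of curvature blow-up points of $(\Sigma_j)$. In positive Ricci curvature there are no two-sided stable minimal surfaces, hence no stable limit leaves in $\cL$; standard removable singularities arguments then extend $\cL$ across $\cS$ to a smooth closed minimal surface in $M$, and $\cS$ reduces to a finite collection of isolated points at which local genus of $\Sigma_j$ accumulates.

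The heart of the proof is to show that the systolic lower bound forces $\cS = \emptyset$. Near any $p \in \cS$, the Colding--Minicozzi local structure presents $\Sigma_j$, for $j$ large, as several nearly parallel sheets joined by thin necks concentrating at $p$; since $p$ is a curvature blow-up point, the number of such necks (and hence the topological contribution to $\chi(\Sigma_j)$ from a neighborhood of $p$) tends to infinity. The necks produce a family of short essential loops in $\Sigma_j$ --- waist curves of individual necks together with transverse loops running between pairs of necks along adjacent sheets --- all of length tending to zero as the neighborhood shrinks to $p$. One verifies that, for any fixed $k$, at least $k$ of these loops can be chosen to be linearly independent in $H_1(\Sigma_j; \IZ/2\IZ)$ once $j$ is large enough, contradicting $\sys^h_k(\Sigma_j) \geq \delta$.

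Once $\cS = \emptyset$, the convergence $\Sigma_j \to \cL$ is smooth, with integer multiplicity, on all of $M$. For large $j$, each $\Sigma_j$ is therefore a smooth graphical cover of the fixed limit surface, and $|\chi(\Sigma_j)|$ is uniformly bounded --- the desired contradiction. The main technical obstacle lies in the second paragraph: one must show that among the short cycles produced by the necking region near $p \in \cS$, at least $k$ are genuinely linearly independent in $H_1(\Sigma_j; \IZ/2\IZ)$, rather than all being homologous to a single waist class. This should be arranged by intersection-theoretic pairings with transverse arcs joining adjacent sheets: such arcs close up to $\IZ/2\IZ$-cocycles on $\Sigma_j$ that pair non-trivially with appropriate combinations of waist and transverse loops, witnessing their homological independence.
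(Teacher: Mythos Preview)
Your argument has a genuine gap at its core. You claim that positive Ricci curvature (no two-sided stable leaves) together with ``standard removable singularities arguments'' forces the singular set $\cS$ to be a finite set of isolated points, after which you invoke the Colding--Minicozzi picture of parallel sheets joined by necks. But the CM structure theory from \cite{CM5} takes bounded local genus or ULSC as \emph{input}; with genus going to infinity and no further control, $\cS$ could a priori be all of $M$ (as the introduction explicitly warns), and there is no ``sheets plus necks'' description available to read off short cycles from. The logic is also inverted: removable-singularities results for minimal laminations assume $\cS$ is small (and the nearby leaves are stable) and conclude extension, not the converse.

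The paper's proof supplies exactly the missing step. Its main technical content (\cref{lem_good_barrier}, \cref{lem_no_necks}, \cref{mult_ulsc}) shows that a lower bound on $\sys^h_k$ forces the sequence to be ULSC away from at most finitely many points: a short curve that is non-contractible in an extrinsic ball is used as boundary data for a Plateau problem in the complement of $\Sigma_j$, and the resulting stable surface survives in the limit, contradicting positive Ricci. Only once ULSC is established does CM theory apply; one then finds a \emph{collapsed} limit leaf---which is stable, not absent---and this stable complete minimal surface yields the contradiction. So the paper does not try to exhibit $k$ independent short loops near a blow-up point; it instead rules out blow-up (off a controlled set) by a barrier/Plateau argument, and the stable leaf does the rest. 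Your proposed endgame of pairing waist loops with transverse arcs is an interesting idea, but it presupposes the very local structure that the paper spends most of its effort justifying.
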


Thanks to the recent work
 of Chodosh--Mantoulidis on the Allen--Cahn equation \cite{CM18},
 any closed three-manifold with positive Ricci curvature contains a sequence of embedded minimal surfaces with unbounded genus,
 see also the related earlier work by Marques--Neves \cite{MN17} and Aiex \cite{aiex}.
Their construction gives a sequence of minimal surfaces $(\Sigma_p)_{p \in \IN}$ with 
$$
\area(\Sigma_p) \sim \genus(\Sigma_p)^{1/3} \sim p^{1/3},
$$
i.e.\ area growing sublinearly in the genus.
In fact, the same result has now also been established using Almgren--Pitts min-max theory through the works of Marques--Neves \cite{MN18} and Zhu \cite{zhu}.
At this point we also would like to point out Song's work settling the general case of Yau's conjecture \cite{S18}
and the papers by 
Irie--Marques--Neves \cite{MNI18}, Marques--Neves--Song \cite{MNS17}, and Liokumovich--Marques--Neves
\cite{LMN18} giving information on the distribution of min-max minimal hypersurfaces in the ambient manifold for generic metrics.
In a smiliar direction \cref{thm_intrinsic} provides information on embedded minimal surfaces of high complexity.
Because of the sublinear growth of the area, \cref{thm_intrinsic} is automatically true for min-max minimal surfaces thanks to general results on systoles of surfaces \cite{BPS} (cf.\ \cref{thm_sys_ratio_1}).
However, \cref{thm_intrinsic} applies to any family of minimal surfaces, not only those arising from min-max methods.

The best known bound for the area of embedded minimal surfaces in an ambient three manifold of positive Ricci curvature is
linear in the genus \cite{CW}.
More precisely, we have that
$$
\area(\Sigma) \leq C (\genus(\Sigma)+1)
$$
for a constant $C$ depending on the topology of $M$ and the lower bound on the Ricci curvature.
It is by no means clear if this bound is sharp and \cref{thm_intrinsic} could be considered as some hint towards the non-sharpness of the linear bound.
It appears to be an interesting question to understand the maximal possible area growth of a sequence of embedded, minimal surfaces with genus tending to infinity.
To the best knowledge of the authors, there is currently no family of closed, embedded minimal surfaces in $\mathbb{S}^3$ (or any closed three-manifold with positive Ricci curvature) known
with unbounded genus such that $\genus(\Sigma)=o(\area(\Sigma)^2)$ as $\genus(\Sigma) \to \infty$.

An interesting family of minimal surfaces (potentially immersed and of higher codimension) with linear growth of the area in the genus was recently constructed by the authors \cite{MS3} (also relying on earlier work by Petrides \cite{petrides}) as maximizing metrics for the first Laplace eigenvalue normalized by area.
These are immersed minimal surfaces in $\mathbb{S}^N$ for $N \leq C(\genus(\Sigma))$.
There is not much known about the geometry of these surfaces.
In \cite{MS3} we proved that infinitely many of them do not arise as branched covers over $\mathbb{S}^2$.
But it is for instance unclear if an infinite number of these surfaces arises as embedded minimal surfaces in $\mathbb{S}^3$.
One way to rule this out would be to show that $\area(\Sigma) = o (\genus(\Sigma))$ as $\genus(\Sigma) \to \infty$
for closed, embedded minimal surfaces in $\mathbb{S}^3$.

It also worth emphasizing that \cref{thm_intrinsic} crucially relies on the surface being embedded, since high-degree covers of a given minimal surface of positive genus provide trivial counterexamples to the immersed version.

We want to briefly discuss why our result is more subtle than one might expect at first glance.
In general, one could expect that $\sys(S_i) \to 0$ for any (i.e.\ not necessarily minimal) sequence of surfaces $S_i$ in $S^3$ with $\genus(S_i) \to \infty$ at least as long as $S_i$ are unknotted.
However, one can easily produce counterexamples to this using the Nash--Kuiper theorem:
Take a surface $S_\gamma$ of genus $\gamma$ with systole $\sys(S_\gamma)\geq c_0>0$.
By the Nash--Kuiper theorem, there is a $C^{1,\alpha}$-isometric embedding of $S_\gamma$ in an arbitrarily small ball $B_\delta \subset \IR^3$.
After smoothing this and applying stereographic projection, we get a sequence of closed, unknotted surfaces of unbounded genus in $S^3$, which have systole uniformly bounded from below.

Moreover, \cref{thm_intrinsic} does not hold without any assumptions on the ambient geometry.

\begin{exa} \label{example}
Denote by $\Sigma_\gamma$ a closed surface of genus $\gamma$ for $\gamma \geq 2$.
It is shown in \cite{To69} (see also \cite{Ne76} for a generalization) that the three-manifold $M=S^1 \times \Sigma_\gamma$ admits fibre bundles
\begin{equation} \label{fibration}
\Sigma_\delta \to M \to S^1
\end{equation} 
for $\delta=\gamma+n(\gamma-1)$ and $n \in \IN$.
Since $\pi_2(S^1)=0$, the long exact sequence for homotopy groups associated to these fibrations implies that $\Sigma_\delta \to M$ is incompressible,
i.e.\ the induced map $\pi_1(\Sigma_\delta) \to \pi_1(M)$ is injective.
It follows from \cite[Theorem 3.1]{SY79} that there are immersed minimal surfaces $S_\delta$ in $M$ which are diffeomorphic to $\Sigma_\delta$ and the induced map on $\pi_1$ is given by the inclusion of the fibres from \eqref{fibration}.
Moreover, \cite[Theorem 5.1]{FHS83} implies that these are not only immersions but even embeddings.
Since $\pi_1(S_\delta) \to \pi_1(M)$ is injective, we have in particular that
\begin{equation*}
\sys(S_\delta) \geq \sys(M) >0.
\end{equation*}
On the other hand, it follows from \cite[Theorem 5.2]{SY79} that $M$ does not admit any metric of positive scalar curvature.
\end{exa}

\subsection*{Main problems and strategy} 
Let us for simplicity focus on the case of $M$ being simply connected, $k=1$ and the systole instead of the homology systole.
We want to argue by contradiction and consider a sequence of minimal surfaces $\Sigma_j \subset M$ with $\sys(\Sigma_j) \geq l_0 >0$ and $\genus(\Sigma_j) \to \infty$. 
In general, we would like to pass to a limit $\Sigma_j \to \cL$ in the class of minimal laminations and argue that $\cL$ has a stable leaf, which would easily lead to a contradiction since $M$ has positive Ricci curvature.
The problem about this is that we can only do this outside the closed set at which $|A^{\Sigma_j}|^2$ blows-up.
A priori, the blow-up set could even be all of $M$.
Work of Colding and Minicozzi gives strong structural information about the blow-up set if the surfaces in question have bounded genus.
The main step of our proof is to show that the sequence $\Sigma_j$ as above can locally be dealt with in this framework.
The reason why this is not obvious is that  we do not have $-\Delta_{\Sigma_j} d^2(x,\cdot) \leq 0$ globally (as it is the case for minimal surfaces in $\IR^3$).
Therefore, the assumption on $\sys(\Sigma_j)$ does not directly imply that there is $R_0=R_0(l_0)$ such that the intrinsic balls $B^{\Sigma_j}(x,R_0)$ are contained in (intrinsic) \emph{disks} in the intersection $B(x,R_0) \cap \Sigma_j$ with an extrinsic ball.
Instead, $B^{\Sigma_j}(x,R_0)$ is contained in some disk $D_x^j \subset \Sigma_j$ but $D_x^j$ could leave any mean convex ball $B(x,r)$ centered at $x$.
The main step is to show that this is impossible after going to a (potentially much) smaller scale.
The proof of this is of global nature and also relies on the positivity of the Ricci curvature of $M$.
It also proceeds by contradiction and follows broadly the same strategy.
Given a contradicting sequence we try to find a stable minimal surface in $M$.
The key step to achieve this is to show that $\Sigma_j$ serves as a good barrier for a minimization problem in $M$.
This in turn is shown by promoting information about singularities of $\Sigma_j$ for $j \to \infty$ across scales using the maximum principle.

The general case of the theorem follows similar steps but is technically more involved.
This requires for instances a 
 more careful blow-up argument
 in the case $k \geq 2$ and also makes use of
 some additional elementary topological arguments.

\subsection*{Organization}
In \cref{sec_cml} we provide necessary background from \cite{CM5} on Colding--Minicozzi lamination theory of minimal surfaces with some control on the topology.
\cref{chord_arc} contains two weak chord-arc properties for minimal surfaces contained in small extrinsic balls of an ambient three-manifold.
In \cref{prelim_surf} we give some rather elementary preliminaries on surfaces and topology and recall a fundamental result from systolic geometry which are needed to prove
our main result, \cref{thm_intrinsic} first in the case $k=1$ in \cref{main} and in the general case in \cref{main_2}

\begin{acknowledge} 
 Both authors would like to thank the Max Planck Institute for Mathematics in Bonn for support and excellent working conditions.
 We would also like to thank Fabian Henneke for explanations related to \cref{example} and Andr{\'e} Neves for asking us, if it is possible to find more than a single short curve. 
\end{acknowledge}

%%%%%%%%%%%%%%%%%%%%%%%
\section{Background on Colding--Minicozzi lamination theory} \label{sec_cml}
%%%%%%%%%%%%%%%%%%%%%%%
Colding and Minicozzi developed a theory that describes how minimal surfaces of uniformly bounded genus in 
an ambient three-manifold can degenerate in the absence of curvature bounds.
We use this section to provide a very brief introduction to those parts of their theory that will be relevant in the present paper.
We will focus here on the case of planar domains, since this is sufficient for our purposes.

\smallskip

We start by recalling the definition of a lamination.

\begin{defn}[see Appendix B in \cite{CM4}]
\begin{enumerate}
\item A codimension one lamination on a three-manifold $M$ is a collection $\mathcal{L}$ of
smooth disjoint surfaces $\Gamma\subset M$, the so-called leaves, such that $\cup_{\Gamma\in\mathcal{L}}\Gamma$ is closed.
Furthermore, for each point $x\in M$, there exists an open neighborhood $U$ of $x$ and a coordinate chart, $(U,\Phi)$,
with $\Phi(U)\subset\R^3$ so that in these coordinates the leaves in $\mathcal{L}$ pass through $\Phi(U)$ in slices
of the form $(\R^2 \times\{t\})\cap\Phi(U).$
\item A foliation is a lamination for which $M=\cup_{\Gamma\in\mathcal{L}}\Gamma$, i.e. the union of the leaves is all of $M$.
\item A minimal lamination is a lamination whose leaves are minimal. 
\item A Lipschitz lamination is a lamination for which the chart maps $\Phi$ are Lipschitz. 
\end{enumerate}
\end{defn}

Given any sequence of minimal surfaces $\Sigma_j \subset M$, we consider the \emph{singular} or \emph{blow-up set}
 \begin{equation*}
\cS=\{z \in M \ \lvert\  \inf_{\delta>0} \sup_j \sup_{B(z,\delta)} |A^{\Sigma_j}|=\infty\},
\end{equation*}
i.e.\ the points $z$ where the curvature blows up.
Up to taking a subsequence one can always pass to a limit
\begin{equation*}
\Sigma_j \to \cL \ \text{in} \ M \setminus \cS,
\end{equation*}
where the convergence is in $C^{0,\alpha}$ and the limit lamination is a minimal Lipschitz lamination.

\smallskip

In the case of minimal surfaces $\Sigma_j \subset B(0,R_j) \subset \IR^3$ with bounded genus and $\partial \Sigma_j \subset \partial B(0,R_j)$ one can always extract a subsequence such that either $R_j \to \infty$ or with $R_j$ bounded.
In the former case one can reach much stronger conclusions on the structure of the limit lamination, 
see e.g.\ the example in \cite{CM04e}. 
Since we only deal with the local case, i.e.\ $R_j=R$ is fixed, which in general only allows to draw significantly weaker conclusions about the structure of the limit lamination,
we do not discuss stronger conclusion valid in the global case.

\smallskip

We first consider the case of $\Sigma_j$ being disks.
Colding and Minicozzi proved \cite{CM1,CM2,CM3,CM4} that
every embedded minimal disk is either a graph
of a function or is a double spiral staircase where each staircase is a multivalued
graph. 
More precisely, they show that if the curvature is large at some
point (and thus the surface is not a graph), then the surface is a double spiral staircase
like the helicoid.

Below we also want to deal with the case where $\Sigma_j$ are more general 
domains than disks, namely, so-called uniformly locally simply connected (in short: ULSC) 
domains.

A sequence of minimal surfaces $\Sigma_j \subset M$ is called \emph{uniformly locally simply connected}
\footnote{We remark that this is stronger than the definition of Colding--Minicozzi in the case of non-planar domains.}
if given any compact $K \subset M$ there is
some $r>0$  such that 
\begin{equation*}
\Sigma_j \cap B(x,r) \ \text{consists of disks for any $x \in K$}.
\end{equation*}
Moreover, we define 
\begin{equation*}
\cS_{\ulsc} :=\{z\ \in \cS \ :  \ \Sigma_j \ \text{is ULSC near}\ z \}.
\end{equation*}

The main local structural result we need for (not necessarily globally planar or bounded genus) ULSC sequences concerns so-called collapsed leaves, whose existence is described in the next lemma.
We assume that $\Sigma_j \to \cL'$ in $M \setminus \cS$, where $\Sigma_j$ is a ULSC sequence.

\begin{lemma}[Lemma II.2.3. in \cite{CM5}]\label{reg}
Given a point $x\in\cS=\cS_{\ulsc}$, there exists $r_0>0$ so that $B(x,r_0)\cap\mathcal{L}^{'}$ has a component $\Gamma_x$ whose closure $\overline{\Gamma_x}$ is a smooth minimal graph containing x and with boundary in $\partial B(x,r_0)$ (so $x$ is a removable singularity for $\Gamma_x$).
\end{lemma}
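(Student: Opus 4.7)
The plan is to combine the ULSC hypothesis with the Colding--Minicozzi disk decomposition and one-sided curvature estimate to extract a smooth graphical leaf whose closure passes through the singular point $x$.

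\smallskip

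First, I would use the ULSC property to pick $r>0$ so that every component of $\Sigma_j \cap B(x,2r)$ is a disk for all large $j$. Since $x\in\cS=\cS_{\ulsc}$, the second fundamental form $|A^{\Sigma_j}|$ blows up along a subsequence at points $x_j\to x$, so the relevant components cannot be uniformly graphical on any fixed scale. Applying the Colding--Minicozzi disk decomposition from \cite{CM1,CM2,CM3,CM4}, one deduces that, after passing to a subsequence, these components contain a double-spiral-staircase region whose axis $\gamma_j$ accumulates at $x$.

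\smallskip

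Next, I would choose coordinates centered at $x$ in which the asymptotic normal direction of the staircase sheets is $e_3$ and $\gamma_j$ is nearly vertical. Each staircase consists of multi-valued graphs over a disk in the horizontal plane $P=\{x_3=0\}$, and the two staircases of opposite orientation are glued along a single-valued ``central'' sheet $\Gamma_x^j$ that separates upward-winding from downward-winding spirals and passes through a point of $\gamma_j$ close to $x$. The one-sided curvature estimate of \cite{CM4} gives uniform gradient and curvature bounds for $\Gamma_x^j$ away from the axis, so a standard compactness argument extracts a smooth limit $\Gamma_x$ which is a minimal graph over a punctured disk in $P$ and is contained in $\cL'$.

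\smallskip

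Finally, I would invoke the removable singularity theorem for minimal graphs: $\Gamma_x$ is bounded, minimal, and defined on a punctured disk, hence extends smoothly across the puncture as a minimal graph through $x$. Taking $r_0<r$ small enough that this extension is still contained in $B_{r_0}(x)$ and has boundary in $\partial B_{r_0}(x)$ produces the desired component. The main obstacle is the middle step: verifying that the two spiral staircases are genuinely joined along a single-valued central sheet whose gradient remains controlled up to the axis $\gamma_j$, uniformly in $j$. This is exactly where the one-sided curvature estimate, together with the detailed structure of ULSC minimal disks developed in \cite{CM4,CM5}, is indispensable; without it one could not rule out the central sheet itself spinning to form a further spiral, and the removable-singularity step would fail.
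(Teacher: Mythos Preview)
The paper does not supply its own proof of this lemma: it is quoted verbatim as Lemma~II.2.3 of \cite{CM5}, and the surrounding discussion of collapsed leaves simply cites further results from that source. So there is no in-paper argument to compare against; your sketch has to be weighed against the actual argument in \cite{CM5}.

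Your overall architecture --- ULSC gives disks near $x$, Colding--Minicozzi disk theory forces multi-valued graphical structure, the one-sided curvature estimate controls the sheets away from the axis, and a removable-singularity theorem closes the puncture --- is the right one. The middle step, however, is mis-described. In a double spiral staircase (model case: the helicoid) there is \emph{no} single-valued ``central sheet'' $\Gamma_x^j\subset\Sigma_j$ separating the two oppositely-winding multivalued graphs; the two staircases interlock and together constitute the whole disk. The leaf $\Gamma_x$ is not the limit of a distinguished single-valued piece of $\Sigma_j$; it is a leaf of the limit lamination $\cL'$ itself, onto which \emph{many} graphical sheets of $\Sigma_j$ collapse with multiplicity tending to infinity. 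In \cite{CM5} one shows that the multi-valued graphs accumulate on a lamination by smooth minimal graphs over an annulus in the tangent plane at $x$, and $\Gamma_x$ is the particular leaf whose closure contains $x$; the removable-singularity step is then applied to that limit leaf. Your first and last paragraphs are fine, but the object you feed into the removable-singularity argument is produced differently than you describe, and the ``main obstacle'' you identify (controlling a central sheet up to the axis) is not the actual issue, since no such sheet exists.
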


We want to emphasize that while \cite{CM5} starting at the end of Section II.1 makes the general assumption to be in the global case $R_j \to \infty$ this does not apply to everything contained in the following sections.
In particular a look at the proof of Lemma II.2.3 show that this does not make use of this assumption.
Similarly, an inspection of the arguments shows the statements from \cref{prop_collapsed} below are valid without this assumption.

The leaves of the limit foliation $\cL^{'}$ may not be complete. 
A special type of incomplete leaves are collapsed leaves.
A leaf $\Gamma$ of $\cL^{'}$ is \textit{collapsed} if there exists some $x\in\cS_{\ulsc}$ so that $\Gamma$ contains the local leaf $\Gamma_x$ given by Lemma\,\ref{reg}; see Definition II.2.9 in \cite{CM5}.

We now assume that the ambient manifold is given as $M=\bar M \setminus \{x_1,\dots,x_k\}$,
where $\bar M$ is complete and $x_i \in \bar M$.
In order to state the key structural results on collapsed leave we need to introduce some notation.
Given a leave $\Gamma \subset \mathcal{L}'$ we fix a point $x \in \cL'$ and write
$$
\Gamma_{clos}= \bigcup_{R>0} \overline{B^\Sigma(x,R)},
$$
where the closure is taken in $M$.

\begin{prop}[see Section II.3. in \cite{CM5}] \label{prop_collapsed}
Each collapsed leaf $\Gamma$ of $\cL'$ has the following
 properties:
\begin{enumerate}
\item  Given any $y \in \Gamma_{clos} \cap
\cS_{\ulsc}$, there exists $r_0 > 0$ so that the closure in $M$ of
each component of $\Gamma\cap B(y,r_0)$ is a compact embedded
 disk  with boundary in $\partial B(y,r_0)$.
 \\
Furthermore, $\Gamma\cap B(y,r_0)$ must contain the component
$\Gamma_y$ given by Lemma\,\ref{reg} and $\Gamma_y$ is the only
component of $\Gamma\cap B(y,r_0)$ with $y$  in its closure.
\item $\Gamma$ is a limit leaf.
\item $\Gamma$ extends to a complete minimal surface away from $\{x_1,\dots,x_k\}$
\footnote{i.e.\ there is $\Gamma'$ containing $\Gamma$ such that if a geodesic in $\Gamma'$ can not be extended it limits to some $x_i$}.
\end{enumerate}
\end{prop}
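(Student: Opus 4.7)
The plan is to reduce each of (1), (2), (3) to a local argument near a singular point $y \in \Gamma_{clos} \cap \cS_{ulsc}$, using ULSC together with \cref{reg}. First I would fix $r_0 > 0$ small enough that \cref{reg} applies at $y$, producing a component $\Gamma_y$ of $\Gamma \cap B_{r_0}(y)$ which is a smooth minimal graph over some plane through $y$, with $y$ a removable singularity; and small enough that, by ULSC, every component of $\Sigma_j \cap B_{r_0}(y)$ is a topological disk for $j$ large.

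For (1), I would pick any component $\Gamma'$ of $\Gamma \cap B_{r_0}(y)$ and use that $\Sigma_j \to \cL'$ in $C^{0,\alpha}$ (and smoothly away from $\cS$). Each compact subset of the smooth part of $\Gamma'$ is then approximated by a smooth graph lying in a connected, and by ULSC simply connected, component of $\Sigma_j \cap B_{r_0}(y)$. Passing to the limit, $\Gamma'$ carries no nontrivial loops. Any remaining singular point of $\Gamma'$ lies in $\cS_{ulsc} \cap \overline{\Gamma'}$, and a second application of \cref{reg} at each such point extends $\Gamma'$ as a smooth graph there. After filling in finitely many such removable singularities, $\overline{\Gamma'} \cap B_{r_0}(y)$ is a smooth embedded planar surface; a continuation argument then forces its boundary onto $\partial B_{r_0}(y)$. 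Uniqueness of the component through $y$ is exactly what \cref{reg} delivers (one local graphical sheet at $y$), and any other component of $\Gamma \cap B_{r_0}(y)$ with $y$ in its closure would coincide with $\Gamma_y$ by unique continuation for minimal graphs.

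For (2), I would argue that in any ball around $y$, the sheets of $\Sigma_j$ collapsing onto $\Gamma_y$ produce further leaves of $\cL'$ on both sides of $\Gamma_y$ converging to it. Concretely, choosing points $p_j \in \Sigma_j$ tending to $y$ from either side of $\Gamma_y$ but staying a definite graphical distance away, the leaves of $\cL'$ through accumulation points of $p_j$ lie on opposite sides of $\Gamma$, are distinct from $\Gamma$, and by construction converge to $\Gamma_y \subset \Gamma$; this is the definition of a limit leaf. For (3), I would extend $\Gamma$ by iteratively invoking \cref{reg} at each point of $\Gamma_{clos} \cap \cS_{ulsc}$, obtaining a larger minimal surface $\Gamma^\dagger \supset \Gamma$ with no interior singularities in $\cS_{ulsc}$. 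Since by hypothesis $M = \bar M \setminus \{x_1,\dots,x_k\}$, every remaining obstruction to extending a geodesic in $\Gamma^\dagger$ must force it to leave every compact subset of $M$, i.e.\ to limit to one of the $x_i$, giving completeness on $\bar M \setminus \{x_1,\dots,x_k\}$.

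The main obstacle is part (1): one has to rule out that a component of $\Gamma \cap B_{r_0}(y)$ spirals indefinitely in $B_{r_0}(y)$ or picks up nontrivial topology in the limit. This requires combining the ULSC disk property at the $\Sigma_j$-level with the fine local structure of the blow-up set described in \cref{reg} (a unique transverse graphical sheet at each ULSC singularity), so that topology cannot accumulate and every component is forced to terminate on $\partial B_{r_0}(y)$.
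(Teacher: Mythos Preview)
The paper does not give its own proof of this proposition: it is quoted from Section~II.3 of \cite{CM5} and used as a black box. There is therefore nothing in the present paper to compare your proposal against.

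Regarding your sketch on its own merits: the overall shape is in line with the Colding--Minicozzi argument, but the key step in (1) is left unresolved. You correctly identify the obstacle---ruling out that a component of $\Gamma\cap B_{r_0}(y)$ spirals indefinitely or accumulates topology---but ``a continuation argument then forces its boundary onto $\partial B_{r_0}(y)$'' is not a proof. In \cite{CM5} this is handled via the one-sided curvature estimate together with the structure of $\cS_{\ulsc}$ as a Lipschitz curve transverse to $\Gamma_y$; these give graphical control over all nearby sheets and are what actually prevent infinite winding. Your argument for (2) is also slightly off: choosing $p_j$ ``staying a definite graphical distance away'' from $\Gamma_y$ is the opposite of what happens---the multi-valued graphs in $\Sigma_j$ collapse onto $\Gamma_y$, and it is the resulting accumulation of sheets in $\cL'$ (possibly $\Gamma$ limiting on itself) that makes $\Gamma$ a limit leaf. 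Part (3) is fine in outline, though it implicitly relies on the embeddedness and local finiteness established in (1).
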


The sequences $\Sigma_j$ appearing in this manuscript will essentially all  be ULSC. 
This is equivalent to the fact that the singular set $\cS$ is given by $\cS_{\ulsc}$, i.e. $\cS=\cS_{\ulsc}$.
Although we will not directly apply the results for non-ULSC surfaces here, some of our arguments (in particular the proof of \cref{lem_no_necks}) are inspired by those in \cite{CM5} for this case.

%%%%%%%%%%%%%%%%%%%
\section{Chord arc properties}
 %%%%%%%%%%%%%%%%%%%%
 \label{chord_arc}
 
We need two weak chord-arc properties for minimal surfaces contained in small extrinsic balls of an ambient three-manifold.
Given $x \in M$ and $r>0$, we write $B(x,r)$ for the metric ball in $(M,g)$.
If $z \in \Sigma$ and $r>0$, we denote by $B^\Sigma(z,r)$ the metric ball of radius $r$ in $\Sigma$ with respect to the induced Riemannian metric.

\smallskip

Let $(M,g)$ be a closed Riemannian three-manifold. 
For $R_0>0$ sufficiently small, we consider minimal embedded disks $\Sigma$ in $B(x_0,R_0)$ for some $x_0 \in M$.
By $\Sigma_{x_0,r}$ we denote the connected component of $\Sigma \cap B(x_0,r)$ that contains $x_0$.

\begin{theorem} \label{thm_chord_arc}
Let $\Sigma \subset B$ be an embedded minimal disk with $x_0 \in \Sigma$.
There is $\alpha>0$ such that if $B^\Sigma (x_0,R) \subset \Sigma \setminus \partial \Sigma$, then $\Sigma_{x_0,\alpha R} \subset B^\Sigma(x_0,R/2)$.
\end{theorem}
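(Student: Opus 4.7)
The plan is to argue by contradiction, blow up to bring any putative violation to unit scale, and then invoke the chord-arc property for embedded minimal disks in $\IR^3$ due to Colding--Minicozzi.

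Suppose no such $\alpha > 0$ exists. Then there are sequences $\Sigma_n \subset B(x_{0,n}, R_0)$ of embedded minimal disks with $x_{0,n} \in \Sigma_n$, radii $R_n > 0$ satisfying $B^{\Sigma_n}(x_{0,n}, R_n) \subset \Sigma_n \setminus \partial\Sigma_n$, and points $y_n \in \Sigma_{n, x_{0,n}, R_n/n}$ with $d^{\Sigma_n}(x_{0,n}, y_n) > R_n / 2$. A preliminary reduction shows that $R_n$ may be taken uniformly bounded in terms of $R_0$: once $\alpha R_n$ exceeds the extrinsic diameter $2R_0$ of $\Sigma_n$, the inclusion reduces to a statement about the intrinsic diameter of the disk from $x_{0,n}$, which is controlled using monotonicity. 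Hence the blow-up scale $\epsilon_n := R_n / n$ tends to zero.

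Next, I rescale by $\epsilon_n$: in normal coordinates based at $x_{0,n}$, the rescaled metrics $\tilde g_n := \epsilon_n^{-2} g$ converge smoothly to the Euclidean metric on compact sets, since $\epsilon_n \to 0$ forces the curvature of $\tilde g_n$ to be $O(\epsilon_n^2)$. The rescaled surfaces $\tilde \Sigma_n$ stay minimal for $\tilde g_n$ and satisfy $B^{\tilde\Sigma_n}(\tilde x_{0,n}, n) \subset \tilde\Sigma_n \setminus \partial\tilde\Sigma_n$, $\tilde y_n \in \tilde \Sigma_{n, \tilde x_{0,n}, 1}$, and $d^{\tilde\Sigma_n}(\tilde x_{0,n}, \tilde y_n) > n/2 \to \infty$. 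If the Euclidean Colding--Minicozzi chord-arc theorem, which provides a universal $\alpha_0 > 0$ so that any embedded minimal disk $\Sigma \subset \IR^3$ with $B^\Sigma(x, R) \subset \Sigma \setminus \partial\Sigma$ satisfies $\Sigma_{x, \alpha_0 R} \subset B^\Sigma(x, R/2)$, could be applied directly, then taking $R = n$ and using $\alpha_0 n \geq 1$ for large $n$ would place $\tilde y_n \in \tilde\Sigma_{n, \tilde x_{0,n}, 1} \subset \tilde\Sigma_{n, \tilde x_{0,n}, \alpha_0 n} \subset B^{\tilde\Sigma_n}(\tilde x_{0,n}, n/2)$, contradicting $d^{\tilde\Sigma_n}(\tilde x_{0,n}, \tilde y_n) > n/2$.

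The main obstacle is that $\tilde \Sigma_n$ is minimal only for $\tilde g_n$, not for the Euclidean metric, so the $\IR^3$ chord-arc theorem does not apply verbatim. To bridge this, I would pass to a subsequential limit via the Colding--Minicozzi lamination convergence recalled in \cref{sec_cml}: after extraction, $\tilde \Sigma_n \to \cL$ in $C^{0,\alpha}$ away from a singular set $\cS \subset \IR^3$, with $\cL$ a minimal lamination whose leaves are planes. If $\lim \tilde x_{0,n}$ lies off $\cS$, the local sheet of $\tilde \Sigma_n$ through $\tilde x_{0,n}$ is a normal graph over the limit plane with uniformly bounded intrinsic diameter, ruling out the intrinsic blow-up. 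If the limit lies on $\cS$, the local helicoid-like structure from Colding--Minicozzi together with the fact that $\tilde y_n$ lies in the same connected component of $\tilde\Sigma_n \cap B(\tilde x_{0,n}, 1)$ as $\tilde x_{0,n}$ forces them to lie on a common sheet of the double staircase, whose intrinsic diameter is again uniformly controlled. Either outcome contradicts $d^{\tilde\Sigma_n}(\tilde x_{0,n}, \tilde y_n) \to \infty$, completing the proof.
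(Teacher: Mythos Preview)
The paper does not give a self-contained argument here; it simply records that the proof of \cite[Proposition~1.1]{CM08} carries over verbatim, the only observation needed being that the Colding--Minicozzi argument uses merely that intrinsic balls $B^\Sigma(x,R)$ are \emph{contained in} disks (rather than being disks themselves) and that $\Sigma_{x,r}$ is a disk whenever $\partial\Sigma \cap B(x,r)=\emptyset$.  Both properties hold in the Riemannian setting at scale $R_0$, so the one-sided curvature estimate and multi-valued graph structure from \cite{CM1,CM2,CM3,CM4} yield the conclusion directly.

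Your blow-up route is genuinely different, but the final paragraph has a real gap.  The lamination convergence $\tilde\Sigma_n \to \cL$ is $C^{0,\alpha}$ \emph{away from} $\cS$ and carries no direct information about intrinsic distances on $\tilde\Sigma_n$.  In the case $0\in\cS$, your assertion that $\tilde x_{0,n}$ and $\tilde y_n$ ``lie on a common sheet of the double staircase, whose intrinsic diameter is again uniformly controlled'' is precisely the content of the chord-arc theorem you are trying to prove: near a singular point the connected component $\tilde\Sigma_{n,0,1}$ can spiral an a~priori unbounded number of times (this is exactly what rescaled helicoids do), and bounding that number in terms of the available intrinsic radius is the entire substance of \cite[Proposition~1.1]{CM08}.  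In the case $0\notin\cS$, curvature is bounded only on some neighbourhood of $0$; the component $\tilde\Sigma_{n,0,1}$ may leave that neighbourhood and enter the singular region before reaching $\tilde y_n$, so graphicality near the origin alone does not bound $d^{\tilde\Sigma_n}(0,\tilde y_n)$.  A secondary issue: your preliminary reduction bounding $R_n$ ``using monotonicity'' is not justified as stated, since monotonicity controls area, not intrinsic diameter.

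If you wish to rescue the blow-up strategy, the honest route is to argue that the \emph{proof} of the Euclidean chord-arc theorem (one-sided curvature estimate plus multi-graph description) is stable under $C^2$-small perturbations of the ambient metric and therefore applies to $(\tilde\Sigma_n,\tilde g_n)$ for large $n$ with a uniform $\alpha_0$.  But that is exactly the paper's claim that the \cite{CM08} proof transfers, and once you grant it the blow-up step is superfluous.
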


This is proved in \cite{CM08} for minimal disks in $\IR^3$, in which case minimal surfaces have non-positive curvature.

The proof of \cref{thm_chord_arc} is exactly as the proof of \cite[Proposition 1.1]{CM08}.
This does not use that intrinsic subballs $B^\Sigma(x,R) \subset \Sigma$ of a minimal disk $\Sigma$ are disks again, but only that they are contained in disks and that $\Sigma_{x,r}$ is a disk provided that $\partial \Sigma \cap B(x,r)=\emptyset$.

\smallskip

We also need a related chord-arc property for uniformly locally simply connected surfaces.

\begin{theorem} \label{thm_chord_arc_2}
Let $\Sigma \subset B(x,R)$ be a minimal surface with $x \in \Sigma$.
Assume that there is $r>0$, such that $\Sigma \cap B(y,r)$ consists only of disks for any $y \in B(x,R-r)$.
Then, given $k \in \IN$ such that $kr \leq R$ there is $\beta_k>0$ such that if $B^\Sigma(x,\beta_k r) \cap \partial \Sigma = \emptyset$,
then 
$\partial (\Sigma_{x,kr}) \subseteq \partial B(x,kr)$.
\end{theorem}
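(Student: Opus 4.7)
The strategy is to argue by contradiction, iterating the disk chord-arc property \cref{thm_chord_arc} combined with the subharmonicity of the extrinsic distance function on minimal surfaces.

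Suppose $\Sigma \subset B(x, kr)$ and $B^\Sigma(x, \beta_k r) \cap \partial \Sigma = \emptyset$, with $\beta_k$ to be determined. I would inductively construct points $z_0 = x, z_1, \ldots \in \Sigma$ together with disks $D_i = \Sigma_{z_i, r}$, which are embedded minimal disks by the ULSC assumption so long as $z_i \in B(x, R-r)$. Applying \cref{thm_chord_arc} to $D_i$ with $R = r/\alpha$ shows that the intrinsic distance from $z_i$ to $\partial D_i$ in $D_i$ is strictly less than $r/\alpha$: were it at least $r/\alpha$, chord-arc would give $D_i = (D_i)_{z_i, r} \subset B^{D_i}(z_i, r/(2\alpha))$, forcing this distance to be at most $r/(2\alpha)$, a contradiction.

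Set $N_i := B^{D_i}(z_i, r/\alpha)$. Provided $d^\Sigma(x, z_i) + r/\alpha \leq \beta_k r$, the hypothesis on $x$ together with the triangle inequality give $N_i \cap \partial \Sigma = \emptyset$, so $\partial N_i$ consists of an intrinsic sphere together with portions of $\partial D_i$ lying on $\partial B(z_i, r)$. I would choose $z_{i+1} \in \partial N_i$ maximizing the extrinsic distance $d(x, \cdot)$. The function $d(x, \cdot)^2$ is subharmonic on $D_i$ with $\Delta_{D_i} d(x, \cdot)^2 \geq c_0$ for a constant $c_0 = c_0(M,g) > 0$ (valid for $r$ small relative to the ambient curvature). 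Comparing with the torsion function $v$ on $N_i$ (solving $-\Delta_{D_i} v = c_0$ with $v|_{\partial N_i} = 0$, satisfying $v(z_i) \geq c_1 r^2$ since $N_i$ is an intrinsic geodesic ball of radius $r/\alpha$ around $z_i$), the maximum principle yields
\begin{equation*}
d(x, z_{i+1})^2 \geq d(x, z_i)^2 + c_1 r^2,
\end{equation*}
together with $d^\Sigma(z_i, z_{i+1}) \leq r/\alpha$.

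Iterating $m$ times gives $d(x, z_m)^2 \geq c_1 m r^2$ and $d^\Sigma(x, z_m) \leq m r/\alpha$. Choosing $m := \lceil k^2/c_1 \rceil$ forces $d(x, z_m) \geq kr$, contradicting $\Sigma \subset B(x, kr)$; setting $\beta_k := m/\alpha + 1/\alpha = O(k^2)$ then satisfies the claim. The main technical ingredient is the quantitative extrinsic progress per step, which relies on the torsion function comparison. The quadratic dependence $\beta_k = O(k^2)$ is expected, reflecting the worst case where the local disks $D_i$ are nearly tangent to a geodesic sphere centered at $x$, yielding only second-order progress in $d(x, \cdot)$ per iteration.
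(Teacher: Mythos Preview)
Your iteration scheme has the right shape, but the torsion-function step contains a genuine gap. You claim that the solution of $-\Delta_{D_i} v = c_0$ on $N_i = B^{D_i}(z_i, r/\alpha)$ with $v|_{\partial N_i}=0$ satisfies $v(z_i)\geq c_1 r^2$ for a uniform $c_1>0$. This would require a \emph{lower} bound on the Gauss curvature of $D_i$, which you do not have: the whole point of the setting is that $|A^{\Sigma}|$ may be arbitrarily large, so $K_{D_i}=\sec(T\cdot\Sigma)-\tfrac12|A|^2$ can be arbitrarily negative. In the model case of a geodesic ball of radius $\rho$ in a surface of constant curvature $-K$, one computes $v(\text{center})\sim \rho/\sqrt{K}\to 0$ as $K\to\infty$. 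Concretely, if $D_i$ is a piece of a rescaled helicoid with pitch $\epsilon\ll r$ (which is ULSC at scale $r$), the intrinsic ball $N_i$ is a long thin region along the axis and the torsion value at the center is $o(r^2)$ as $\epsilon\to 0$. Hence your inequality $d(x,z_{i+1})^2\geq d(x,z_i)^2+c_1 r^2$ cannot be obtained this way, and the induction does not close.

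A related obstruction is that the part of $\partial N_i$ lying on the intrinsic sphere of radius $r/\alpha$ may consist of points at very small \emph{extrinsic} distance from $z_i$ (again, think of the helicoid winding back), so no barrier built only from $d(z_i,\cdot)^2$ and $d(x,\cdot)^2$ forces definite outward progress at such boundary points. The paper does not give a self-contained argument here; it invokes the proof in \cite[Appendix B.1]{CM5}, where the corresponding statement (with intrinsic balls) is obtained by a more careful iterated use of the disk chord-arc bound of \cref{thm_chord_arc} at varying scales rather than via a torsion/eigenvalue comparison. If you want to repair your approach, you would need to replace the torsion estimate by an argument that uses only the upper curvature bound (equivalently, only the subharmonicity of extrinsic distance squared) together with the structure coming from \cref{thm_chord_arc}; a bare maximum-principle step on $N_i$ is not enough.
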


This is stated in \cite[Appendix B.1]{CM5} for intrinsic instead of extrinsic balls.
In our setting, intrinsic balls that are contained in a disk may not be disks themselves.
The version stated above is proved as in \cite{CM5} with some easy changes using \cref{thm_chord_arc}.

%%%%%%%%%%%%%%%%%%%%%%%
\section{Some preliminaries on surfaces and topology}\label{prelim_surf}
%%%%%%%%%%%%%%%%%%%%%%%

In this section we recall some elementary and well known facts about the topology of surfaces.\
We also recall some results from systolic geometry.

%%%%%%%%%%%%%%%%%%%%%%%%%%%%%%%%%%%%%%%%%%%%%%%%%%
\subsection{A result from systolic geometry}
%%%%%%%%%%%%%%%%%%%%%%%%%%%%%%%%%%%%%%%%%%%%%%%%%%

We will use the following result from systolic geometry, that relates the area and the $k$-th homology systole.

\begin{theorem}[{\cite[Theorem 1.2]{BPS}, see also \cite{Gr}}] \label{thm_sys_ratio_1}
Let $\eta \colon \IN \to \IN$ be a function such that
\begin{equation*}
\lambda:= \sup_\gamma \frac{\eta(\gamma)}{\gamma}<1.
\end{equation*}
Then there exists a constant $C_\lambda$ such that for every closed, orientable Riemannian surface $\Sigma$ of genus $\gamma$, we have
\begin{equation*}
\sys^h_{\eta(\gamma)}(\Sigma) \leq C_\lambda \frac{\log(\gamma+1)}{\sqrt{\gamma}}\sqrt{\area(\Sigma)}.
\end{equation*}
\end{theorem}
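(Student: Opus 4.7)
The plan is to establish the bound inductively by applying Gromov's classical systolic inequality to a sequence of pinched quotient surfaces. Recall Gromov's inequality: for any closed orientable Riemannian surface $S$ of genus $g \geq 1$,
\begin{equation*}
\sys^h(S) \leq C_0 \frac{\log(g+1)}{\sqrt{g}}\sqrt{\area(S)}.
\end{equation*}
For the base case, apply this directly to $\Sigma$ to obtain a non-separating simple closed curve $c_1$ of length at most $C_0 \log(\gamma+1)/\sqrt{\gamma}\cdot\sqrt{\area(\Sigma)}$.

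For the inductive step, suppose pairwise disjoint simple closed curves $c_1, \dots, c_j \subset \Sigma$ have already been chosen with linearly independent classes in $H_1(\Sigma;\IZ/2\IZ)$. Form the pinched quotient $\Sigma_j' := \Sigma / \{c_1 \sim p_1, \dots, c_j \sim p_j\}$, which is a closed orientable nodal surface of normalized genus $\gamma - j$, area $\area(\Sigma)$, and $j$ isolated nodal singularities. Apply Gromov's inequality (in its polyhedral version, valid for $2$-dimensional geodesic metric spaces with isolated singularities and a $\IZ/2\IZ$-fundamental class) to $\Sigma_j'$, yielding a non-separating simple closed curve $\tilde c_{j+1}$ of length at most $C_0 \log(\gamma-j+1)/\sqrt{\gamma-j}\cdot\sqrt{\area(\Sigma)}$. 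A small perturbation makes $\tilde c_{j+1}$ avoid the finite nodal set, so it lifts to a simple closed curve $c_{j+1} \subset \Sigma \setminus (c_1 \cup \dots \cup c_j)$.

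The key homological observation is that $\Sigma_j'$ is homotopy equivalent to $\Sigma$ with a disk attached along each $c_i$, so the quotient map $q \colon \Sigma \to \Sigma_j'$ induces a surjection $q_* \colon H_1(\Sigma;\IZ/2\IZ) \twoheadrightarrow H_1(\Sigma_j';\IZ/2\IZ)$ with kernel exactly $\langle [c_1], \dots, [c_j]\rangle$. Since $[\tilde c_{j+1}] \neq 0$ in $H_1(\Sigma_j';\IZ/2\IZ)$ by \cref{lem_curve_sep}, the class $[c_{j+1}]$ does not lie in $\ker q_*$, and is therefore linearly independent from $[c_1], \dots, [c_j]$. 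Provided $j+1 \leq \eta(\gamma) \leq \lambda\gamma$, we have $\gamma - j \geq (1-\lambda)\gamma$, so
\begin{equation*}
\length(c_{j+1}) \leq \frac{C_0}{\sqrt{1-\lambda}}\frac{\log(\gamma+1)}{\sqrt{\gamma}}\sqrt{\area(\Sigma)},
\end{equation*}
and the result follows with $C_\lambda := C_0/\sqrt{1-\lambda}$ (adjusting the constant to absorb the finitely many small genus cases where $\gamma - j < 1$ can occur).

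The main technical obstacle is justifying the application of Gromov's systolic inequality to the singular nodal surface $\Sigma_j'$ rather than a smooth Riemannian one. This can be addressed either by invoking a polyhedral or Lipschitz version of Gromov's filling radius argument, or by an $\epsilon$-smoothing that desingularizes each nodal point while adding arbitrarily small area, recovering the same bound in the limit $\epsilon \to 0$. The other subtlety, the correspondence between short loops in $\Sigma_j'$ and short loops in $\Sigma$ with controlled homology class, is handled cleanly via the explicit description of $\ker q_*$ above.
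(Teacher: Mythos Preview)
The paper does not give its own proof of this theorem; it is quoted as a result from Balacheff--Parlier--Sabourau \cite{BPS} (with antecedents in Gromov \cite{Gr}), so there is nothing to compare your proposal against on that front.

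As for the argument itself, the inductive scheme is natural and is essentially the right idea, but the central step has a genuine gap. You apply Gromov's inequality to the pinched nodal space $\Sigma_j'$ and claim a bound in terms of the \emph{normalized} genus $\gamma-j$. There is no standard version of Gromov's surface inequality that says this, and your two suggested fixes do not work as stated. If by ``$\epsilon$-smoothing'' you mean replacing each node by a thin neck, you recover a surface of genus $\gamma$, not $\gamma-j$, and Gromov's inequality gives nothing new. An appeal to an unspecified ``polyhedral version'' for singular spaces begs the question of which topological invariant enters the bound; note that $b_1(\Sigma_j';\IZ/2\IZ)=2\gamma-j$, not $2(\gamma-j)$, so it is not clear why $\gamma-j$ should be the relevant quantity.

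The repair is to work instead with the \emph{normalization} $\hat\Sigma_j$: cut $\Sigma$ along $c_1,\dots,c_j$ and cap the $2j$ boundary circles with disks of total area $\epsilon$. If $\hat\Sigma_j$ is connected it is a genuine closed orientable surface of genus $\gamma-j$, and Gromov's inequality applies directly; the short homologically non-trivial curve it produces can be homotoped off the tiny caps at cost $O(\sqrt\epsilon)$, and one checks (via the intersection pairing) that its class in $H_1(\Sigma;\IZ/2\IZ)$ lies outside $\langle[c_1],\dots,[c_j]\rangle$. Connectivity of $\hat\Sigma_j$ is not automatic from mere linear independence of the $[c_i]$, but it \emph{is} guaranteed inductively provided that at each stage you choose $c_{i+1}$ to be non-separating in $\hat\Sigma_i$, which the curve realizing $\sys^h(\hat\Sigma_i)$ always is. With these adjustments the argument goes through, and this is close in spirit to what \cite{BPS} actually do.
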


Recall that a non-orientable surface $\Sigma$ can be written as a connected sum $\Sigma=\Sigma_1 \# \Sigma_2$, with $\Sigma_1$ closed, orientable and
 $\Sigma_2$ diffeomorphic to $\IRP^2$ or $\IRP^2 \#\IRP^2$.
If we replace $\Sigma_2$ by a disk, \cref{thm_sys_ratio_1} easily implies the following for non-orientable surfaces.

\begin{cor}
Let $\eta$ and $\lambda$ be as above, then there is a constant $C_\lambda$, such that for every closed, non-orientable surface of non-orientable genus $\delta$, we have
\begin{equation*}
\sys^h_{\eta(\gamma_\delta)}(\Sigma) \leq  C_\lambda \frac{\log(\gamma_\delta+1)}{\sqrt{\gamma_\delta}}\sqrt{\area(\Sigma)},
\end{equation*}
where $\gamma_\delta=\lfloor (\delta-1)/2 \rfloor$.
\end{cor}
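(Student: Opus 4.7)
The plan is to reduce to the orientable case by a disk-replacement construction. Write $\Sigma = \Sigma_1 \# \Sigma_2$ with $\Sigma_1$ closed orientable of genus $\gamma_\delta$ and $\Sigma_2 \in \{\IRP^2, \IRP^2 \# \IRP^2\}$, and realize the connect sum geometrically: there is an embedded subsurface $N \subset \Sigma$ homeomorphic to $\Sigma_2$ minus an open disk such that $\Sigma \setminus \mathring N$ is homeomorphic to $\Sigma_1$ minus an open disk. For each $\varepsilon > 0$, form a closed orientable Riemannian surface $\Sigma'_\varepsilon$ of genus $\gamma_\delta$ by excising $\mathring N$ from $\Sigma$ and gluing in a small flat round disk $D'_\varepsilon$ of area, diameter and perimeter each at most $\varepsilon$, with a smooth transition of the metric in a thin collar around $\partial N$; this gives $\area(\Sigma'_\varepsilon) = \area(\Sigma) + O(\varepsilon)$.

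Applying \cref{thm_sys_ratio_1} to $\Sigma'_\varepsilon$ yields simple closed curves $c_1^\varepsilon, \ldots, c_k^\varepsilon$, with $k := \eta(\gamma_\delta)$, linearly independent in $H_1(\Sigma'_\varepsilon; \IZ/2\IZ)$ and satisfying
\[ L_\varepsilon := \max_i \length(c_i^\varepsilon) \leq C_\lambda \frac{\log(\gamma_\delta+1)}{\sqrt{\gamma_\delta}} \sqrt{\area(\Sigma'_\varepsilon)}. \]
After a transversality perturbation, each intersection $c_i^\varepsilon \cap D'_\varepsilon$ is a disjoint union of arcs with endpoints on $\partial D'_\varepsilon$. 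A standard coarea argument on a thin collar of $\partial D'_\varepsilon$ inside $D'_\varepsilon$ produces a level curve $\gamma^\varepsilon$ parallel to $\partial D'_\varepsilon$ such that each $c_i^\varepsilon$ meets $\gamma^\varepsilon$ in at most $O(L_\varepsilon/\varepsilon)$ points. Replace the portion of $c_i^\varepsilon$ enclosed by $\gamma^\varepsilon$ by a union of arcs along $\gamma^\varepsilon$, each of length at most $\pi\varepsilon$, and then push the result radially outward to $\partial D'_\varepsilon$. The resulting curves $\tilde c_i^\varepsilon$ lie in $\Sigma \setminus \mathring N \subset \Sigma$, are freely homotopic to $c_i^\varepsilon$ inside $\Sigma'_\varepsilon$ (since $D'_\varepsilon$ is simply connected), and satisfy $\length(\tilde c_i^\varepsilon) \leq C' L_\varepsilon$ for some absolute constant $C'$.

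To conclude, one verifies that $[\tilde c_1^\varepsilon], \ldots, [\tilde c_k^\varepsilon]$ are linearly independent in $H_1(\Sigma; \IZ/2\IZ)$. By excision applied to the homeomorphism $\Sigma \setminus \mathring N \cong \Sigma'_\varepsilon \setminus \mathring D'_\varepsilon$, one has $H_1(\Sigma, N; \IZ/2\IZ) \cong H_1(\Sigma'_\varepsilon, D'_\varepsilon; \IZ/2\IZ)$, and the latter is isomorphic to $H_1(\Sigma'_\varepsilon; \IZ/2\IZ)$ since $D'_\varepsilon$ is contractible. The long exact sequence of the pair $(\Sigma, N)$, together with connectedness of $N$ and $\Sigma$, yields a surjection $H_1(\Sigma; \IZ/2\IZ) \twoheadrightarrow H_1(\Sigma, N; \IZ/2\IZ)$ which sends the class of any $1$-cycle supported in $\Sigma \setminus \mathring N$ to its class in $H_1(\Sigma'_\varepsilon; \IZ/2\IZ)$. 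Consequently linear independence of $\{[\tilde c_i^\varepsilon]\}$ in $H_1(\Sigma'_\varepsilon; \IZ/2\IZ)$ forces linear independence in $H_1(\Sigma; \IZ/2\IZ)$, so $\sys^h_k(\Sigma) \leq C' L_\varepsilon$. Letting $\varepsilon \to 0$ gives the claimed bound, with the constant $C'C_\lambda$ absorbed into the name $C_\lambda$.

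The main technical subtlety is the length-control step: a naive arc-by-arc replacement at $\partial D'_\varepsilon$ can inflate length proportionally to the number of arcs, which is not a priori bounded independently of the specific curves produced by \cref{thm_sys_ratio_1}. The coarea argument on shrinking collars inside $D'_\varepsilon$ circumvents this at the price of absorbing a universal multiplicative constant into $C_\lambda$, which is harmless since the statement allows $C_\lambda$ to depend only on $\lambda$.
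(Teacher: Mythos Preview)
Your argument is correct and follows precisely the disk-replacement approach the paper indicates in the single sentence preceding the corollary; you have simply supplied the details the paper omits. One minor point: the coarea step as stated selects a single level curve $\gamma^\varepsilon$ serving all $c_i^\varepsilon$ simultaneously, which would introduce a factor of $k=\eta(\gamma_\delta)$ into $C'$, but choosing a separate level for each curve before pushing out to $\partial D'_\varepsilon$ removes this harmlessly and yields the absolute constant you claim.
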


We will only use the following consequence of these results.

\begin{cor}  \label{thm_sys_ratio}
Let $(\Sigma_j)$ be a sequence of surfaces with $-\chi(\Sigma_j)\to \infty$.
If $\area(\Sigma_j)=O((-\chi(\Sigma_j))^{\alpha})$ for some $0\leq \alpha<1$, then, for any $k \in \IN$, we have 
\begin{equation*}
\sys^h_k(\Sigma_j) \to 0
\end{equation*}
 as $j \to \infty$.
\end{cor}

To put this into context, notice that the Choi--Wang bound \cite{CW} implies
 for a closed, embedded, orientable, minimal surface $\Sigma$ that
\begin{equation*}
\area(\Sigma) \leq C (\genus(\Sigma)+1),
\end{equation*}
where $C=C(k)$, if $\Ric(M) \geq k >0$.

%%%%%%%%%%%%%%%%%%%%%%%%%%%%%%%%%%%%%%%%%%%%%%%%%%
\subsection{Some elementary facts about the topology of surfaces}
%%%%%%%%%%%%%%%%%%%%%%%%%%%%%%%%%%%%%%%%%%%%%%%%%%

\begin{lemma} \label{lem_curve_sep}
Let $\Sigma$ be a closed surface and $c \subset \Sigma$ a simple closed curve.
Then $[c]\neq 0 \in H_1(\Sigma ; \IZ/2\IZ)$ if and only if $c$ is non-separating.
\end{lemma}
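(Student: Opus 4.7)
The plan is to handle the two implications separately, with Poincar\'e--Lefschetz duality (or more concretely the mod-2 intersection form) as the main tool.

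First I would treat the easy direction: if $c$ separates $\Sigma$ into two pieces $\Sigma_1, \Sigma_2$ with common boundary $c$, then $\Sigma_1$ is a compact surface with boundary $c$, and triangulating it gives a singular 2-chain with $\IZ/2\IZ$-coefficients whose boundary is exactly $c$ (every interior edge appears in the boundary of two adjacent triangles and so cancels mod 2). Hence $[c]=0$ in $H_1(\Sigma;\IZ/2\IZ)$.

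For the converse, I would argue contrapositively and show that if $c$ is non-separating then $[c]\neq 0$. The idea is to build a second simple closed curve $c'$ that meets $c$ transversally in exactly one point, so the mod-2 intersection number $c\cdot c'=1$; since the mod-2 intersection pairing $H_1(\Sigma;\IZ/2\IZ)\times H_1(\Sigma;\IZ/2\IZ)\to \IZ/2\IZ$ is well-defined (and non-degenerate) on any closed surface, this forces $[c]\neq 0$. The construction of $c'$ splits into two cases according to whether $c$ is two-sided (tubular neighborhood an annulus) or one-sided (tubular neighborhood a M\"obius band). In the two-sided case, I would use that $\Sigma\setminus c$ is connected to find a path in $\Sigma\setminus c$ from one boundary circle of a tubular neighborhood to the other and close it up with a short arc crossing $c$ once; in the one-sided case, a small loop traversing the core of the M\"obius band meets $c$ transversally in one point directly.

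The routine but slightly delicate part is to make sure the construction yields a genuinely \emph{simple} closed curve $c'$ with transverse intersection (generic perturbation takes care of this), and to invoke the mod-2 intersection pairing in a form that is valid for non-orientable $\Sigma$ as well; this is exactly the reason the statement is phrased with $\IZ/2\IZ$-coefficients. I do not foresee a serious obstacle beyond keeping track of these two cases; the whole argument is classical, and could alternatively be packaged by computing $H_1(\Sigma,c;\IZ/2\IZ)$ from the long exact sequence of the pair and observing that $[c]=0$ iff the complement has two components.
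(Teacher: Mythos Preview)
Your argument is correct and follows essentially the same approach as the paper: the separating direction is immediate since $c$ bounds a $2$-chain, and for the non-separating direction you construct a curve meeting $c$ transversally in a single point and invoke the mod-$2$ intersection pairing, exactly as the paper does. You simply supply more detail (the two-sided versus one-sided case distinction, the triangulation) where the paper is terse.
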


\begin{proof}
Clearly, if $c$ is separating, then $[c]=0$ in $H_1(\Sigma, \IZ/2\IZ)$.
On the other hand, if $c$ is non-separating, there is a curve $d$ such that $|c \cap d|=1$.
In particular, from the intersection pairing, $[c] \neq 0 \in H_1(\Sigma, \IZ/2\IZ)$.
\end{proof}

\begin{lemma} \label{lem_top_sep}
Let $\Sigma$ be a closed surface and $c \subset \Sigma$ a simple closed curve, such that $[c]\neq 0 \in H_1(\Sigma;\IZ/2\IZ)$.
If we have $c \subset B(x,r)$ and a simple closed curve $d \subset \Sigma$ homologous to $c$ with $d \subset M \setminus B(x,r)$, then $d$ is separating in $\Sigma \cap M \setminus B(x,r)$.
\end{lemma}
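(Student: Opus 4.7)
The plan is to argue by contradiction using the mod-$2$ intersection pairing on the closed surface $\Sigma$. Write $\Sigma' := \Sigma \cap (M \setminus B(x,r))$, so $d \subset \Sigma'$ while $c \subset B(x,r)$ is disjoint from $\Sigma'$. The key observation is that if $d$ failed to separate $\Sigma'$, one could produce a simple closed curve $\gamma \subset \Sigma'$ meeting $d$ transversally in exactly one point and disjoint from $c$; this would contradict $[c]=[d]$ via the intersection form on $\Sigma$.

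Before running the contradiction, I would record that $d$ is two-sided in $\Sigma$. Since $c$ and $d$ are disjoint, their mod-$2$ intersection number vanishes, and together with $[c]=[d]$ this yields $[d]\cdot[d] = [c]\cdot[d] = 0$ in $\IZ/2\IZ$. Vanishing of the mod-$2$ self-intersection of a simple closed curve on a surface is equivalent to the curve being two-sided, so $d$ admits an annular tubular neighborhood in $\Sigma$, and hence also in $\Sigma'$.

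Now assume, for contradiction, that $d$ does not separate $\Sigma'$, and let $\Sigma'_0$ be the connected component of $\Sigma'$ containing $d$. Since $d$ is two-sided and non-separating in the (possibly bordered) surface $\Sigma'_0$, a standard surface-topology construction produces a simple closed curve $\gamma \subset \Sigma'_0$ meeting $d$ transversally in exactly one point: the two sides of $d$ in $\Sigma'_0$ can be joined by an embedded arc in $\Sigma'_0 \setminus d$, using that $\Sigma'_0 \setminus d$ is connected, and one closes this arc up by a short segment crossing $d$ once in an annular neighborhood. Viewing $\gamma$ as a simple closed curve in $\Sigma$, one then has $\gamma \cdot d = 1$ in $\IZ/2\IZ$, whereas $\gamma \cdot c = 0$ because $\gamma \subset \Sigma'$ and $c \subset B(x,r)$ are disjoint. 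This contradicts $[c]=[d]$ in $H_1(\Sigma;\IZ/2\IZ)$.

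The only genuinely geometric point is the construction of $\gamma$ above; everything else is a formal consequence of the intersection pairing. Transversality of $d$ and the auxiliary arcs with $\partial B(x,r)\cap\Sigma$ can be arranged by small perturbations, which does not affect any of the homological statements.
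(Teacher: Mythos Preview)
Your proof is correct and follows the same approach as the paper's: assume $d$ is non-separating in $\Sigma \cap (M\setminus B(x,r))$, produce a curve (the paper's $e$, your $\gamma$) in this region meeting $d$ exactly once, and contradict $[c]=[d]$ via the mod-$2$ intersection pairing since the curve is disjoint from $c$. The paper's three-line version omits your explicit two-sidedness check and the details of constructing $\gamma$, but the argument is identical.
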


\begin{proof}
If $d$ is non-separating in $\Sigma \setminus B(x,r)$ we can find a curve $e \subset \Sigma \setminus B(x,r)$ that intersects $d$ exactly once.
On the other hand, $c \cap e = \emptyset$, which is impossible, since $c$ and $d$ are homologous. 
\end{proof}

\begin{defn}
Let $\Sigma \subset M$ be an embedded surface, $x \in M$ and $r>0$.
We say that $c \colon S^1 \to \Sigma$ is \emph{contractible on scale $r$ at $x$} if there is a disk $\phi \colon D \to B(x,r) \cap \Sigma$ with $\left. \phi \right|_{S^1} = c$.
If there is some $x \in M$ such that $c$ is contractible on scale $r$ at $x$ we say that \emph{$c$ is contractible on scale $r$}.
\end{defn}

We will make frequent use of the following version of the maximum principle related to this definition. 
Choose $r>0$ such that any ball $B(x,s) \subset M$ with $s \leq r$ and $x \in M$ is mean convex.
If $\Sigma \subset M$ is a complete minimal surface and $c \subset \Sigma$ is contractible on scale $r$ (at $x$) then $c$ is contractible on scale $s$ (at $x$) for any $s \leq r$.

\begin{lemma} \label{lem_gen_fund}
Let $\Sigma \subset M$ be a surface, $x \in \Sigma$, and $R>0$, 
If any curve $d \colon S^1 \to B^\Sigma(x,R)$ with $\length(d) \leq 3R$ is contractible on scale $r$ at $x$
then any curve $c \colon S^1 \to B^\Sigma(x,R)$ is contractible on scale $r$ at $x$.
\end{lemma}

\begin{proof}
Let $c \colon S^1 \to B^\Sigma(x,R)$ be a loop. 
Choose a subdivision
\begin{equation*}
0=t_0 < t_1 < \dots < t_{k-1}<t_k=1,
\end{equation*}
 of $[0,1]$ such that 
\begin{equation*}
\length( c_{|[t_i,t_{i+1}]}) \leq R.
\end{equation*}
Fix curves $d_i \colon I \to B^\Sigma(x,R)$ with $d_i(0)=x$ and $d_i(1)=c(t_i)$ and such that 
\begin{equation*}
\length(d_i) \leq R.
\end{equation*}
We can then write
\begin{equation*}
\begin{split}
c= &
 \left( c_{|{[t_{k-1},t_k]}} \ast d_{k-1} \right) \ast \left(\bar{d}_{k-1} \ast c_{|{[t_{k-2},t_{k-1}]}} \ast d_{k-2} \right) \ast  \\
 &\dots \ast \left(\bar{d_2} \ast c_{|{[t_1,t_2]}} \ast d_1 \right) \ast \left( \bar{d_1} \ast c_{|{[t_0,t_1]}}\right),
\end{split}
\end{equation*}
which implies the assertion.
\end{proof}

Since the Hurewicz homomorphism $\pi_1(B^\Sigma(x,R),x) \to H_1(B^\Sigma(x,R);\IZ)$ as well as the map $H_1(B^\Sigma(x,R);\IZ) \to H_1(B^\Sigma(x,R);\IZ/2\IZ)$ are surjective, we immediately get the following corollary.

 \begin{cor} \label{cor_gen_hom}
Let $\Sigma$ be a surface, $x \in \Sigma$, and $R>0$, then the group $H_1(B^\Sigma(x,R);\IZ/2\IZ)$ is generated by curves of length at most $3R$.
\end{cor}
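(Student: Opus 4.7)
The plan is to derive the corollary directly from Lemma~\ref{lem_gen_fund}, which already supplies a generating set of $\pi_1(B^\Sigma(x,R),x)$ consisting of loops of length at most $3R$. All that remains is to transport these generators to $\IZ/2\IZ$-homology via two standard surjections.

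First, I will use that for any path-connected space $X$ with basepoint $x$, the Hurewicz homomorphism $h \colon \pi_1(X,x) \to H_1(X;\IZ)$ is surjective, being the abelianization map. Applied to $X = B^\Sigma(x,R)$, this gives a surjection onto $H_1(B^\Sigma(x,R);\IZ)$. Composing with the coefficient-reduction homomorphism $H_1(B^\Sigma(x,R);\IZ) \to H_1(B^\Sigma(x,R);\IZ/2\IZ)$ induced by the quotient $\IZ \twoheadrightarrow \IZ/2\IZ$, which is surjective by the universal coefficient theorem (or simply because the short exact sequence of coefficients gives an exact sequence ending in $0$), yields a surjection
\begin{equation*}
\Phi \colon \pi_1(B^\Sigma(x,R),x) \longrightarrow H_1(B^\Sigma(x,R);\IZ/2\IZ).
\end{equation*}

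Second, the image $\Phi([\gamma])$ of a based loop $\gamma$ is represented by the singular $1$-cycle $\gamma$ with coefficient $1 \in \IZ/2\IZ$, so the length of the representative agrees with the length of $\gamma$. Applying Lemma~\ref{lem_gen_fund} and pushing the resulting generators of $\pi_1$ through $\Phi$ produces a generating set of $H_1(B^\Sigma(x,R);\IZ/2\IZ)$ consisting of curves of length at most $3R$, which is exactly the claim.

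I do not expect any genuine obstacle here: once Lemma~\ref{lem_gen_fund} is available, the corollary is formal, and the only point worth emphasizing is that the passage from $\pi_1$ to $\IZ/2\IZ$-homology preserves the geometric loop used as a representative, so the length bound is inherited without loss.
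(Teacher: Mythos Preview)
Your proof is correct and follows exactly the same approach as the paper: the paper simply notes that the Hurewicz homomorphism $\pi_1(B^\Sigma(x,R),x) \to H_1(B^\Sigma(x,R);\IZ)$ and the coefficient-reduction map $H_1(B^\Sigma(x,R);\IZ) \to H_1(B^\Sigma(x,R);\IZ/2\IZ)$ are surjective, and then deduces the corollary immediately from Lemma~\ref{lem_gen_fund}. Your version is slightly more detailed but contains no additional ideas.
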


\begin{lemma} \label{lem_cov_sep}
Let $\Sigma$ be a closed surface and $\pi \colon \hat \Sigma \to \Sigma$ a covering.
Consider a simple closed curve $c \subset \Sigma$ and its preimage $\hat c = \pi^{-1}( c) \subset \hat \Sigma$.
If  $c$ is separating, then also $\hat c$ is separating.
\end{lemma}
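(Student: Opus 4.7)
The plan is short because the statement is really topological and does not require anything beyond the definition of a covering and of a separating curve. The point is that a covering map is automatically surjective (onto each component of $\Sigma$) and that preimages preserve disjoint unions.

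First I would unwind the hypothesis. Since $c$ is separating, $\Sigma \setminus c = \Sigma_1 \sqcup \Sigma_2$ for two nonempty open sets $\Sigma_1,\Sigma_2 \subset \Sigma$. Because $\pi$ is continuous, $\pi^{-1}(\Sigma_i)$ is open in $\hat\Sigma$; because $\Sigma_1 \cap \Sigma_2 = \emptyset$, the two preimages are disjoint; and because $\pi$ is a covering map of a (connected) surface, $\pi$ is surjective, so each $\pi^{-1}(\Sigma_i)$ is nonempty.

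Next I would compute
\begin{equation*}
\hat\Sigma \setminus \hat c \;=\; \pi^{-1}(\Sigma \setminus c) \;=\; \pi^{-1}(\Sigma_1) \,\sqcup\, \pi^{-1}(\Sigma_2),
\end{equation*}
exhibiting $\hat\Sigma \setminus \hat c$ as the disjoint union of two nonempty open subsets. Hence $\hat\Sigma \setminus \hat c$ is disconnected, which by definition means that $\hat c$ separates $\hat\Sigma$.

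There is no real obstacle here; the only mild subtlety is the remark that $\pi$ must be surjective, which one needs to invoke to guarantee that neither $\pi^{-1}(\Sigma_1)$ nor $\pi^{-1}(\Sigma_2)$ is empty. Note that the argument makes no use of the fact that $c$ is a \emph{simple} closed curve, nor of the smooth structure: it works for any closed subset $c$ that separates $\Sigma$ and for any covering map $\pi$.
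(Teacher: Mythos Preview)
Your argument is correct and is essentially the same as the paper's: the paper packages the separation $\Sigma \setminus c = \Sigma_+ \cup \Sigma_-$ via a function $f \colon \Sigma \to [-1,1]$ with $\{f=0\}=c$ and then lifts $\hat f = f \circ \pi$, while you work directly with the preimages $\pi^{-1}(\Sigma_\pm)$; both boil down to the observation that preimages under a surjective continuous map preserve a nontrivial disjoint decomposition. Your explicit remark about surjectivity of $\pi$ (needed to ensure both pieces are nonempty) is a point the paper leaves implicit.
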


\begin{proof}
If $c$ is separating, we can write $\Sigma \setminus c = \Sigma_+ \cup \Sigma_-$ with connected surfaces $\Sigma_\pm$.
Moreover, there is a function $f \colon \Sigma \to [-1,1]$ such that $\{f=0\}=c$ and $\Sigma_{\pm}=\{f \gtrless 0\}.$
We can then consider the lifted function $\hat f = f \circ \pi$, which clearly satisfies $\{ \hat f =0\}=\hat c$.
Therefore, $\hat c$ separates $\hat \Sigma$ into $\hat \Sigma_-=\{\hat f <0\}$ and $\hat \Sigma_+=\{ \hat f >0 \}$.
\end{proof}

It will be important to keep in mind that the domains $\hat \Sigma_{\pm}$ might be disconnected and $\hat c$ is potentially not the boundary of a compact subsurface.

%%%%%%%%%%%%%%%%%%%%%%%
\section{Existence of one short curve}\label{main}
%%%%%%%%%%%%%%%%%%%%%%%

Throughout this section let $(M,g)$ be a closed three-manifold with positive Ricci curvature.
In order to prove \cref{thm_intrinsic}, we want to argue by contradiction.
Therefore, we study properties of a sequence $\Sigma_j \subset (M,g)$ of closed, embedded minimal surfaces
 with $\sys^h_k(\Sigma_j) \geq l_0>0$.
 More precisely, we will be concerned with a limit lamination
 \begin{equation*}
 \Sigma_j \to \cL \ \text{in} \ M \setminus \cS
 \end{equation*}
 of such a sequence.
 For the sake of clarity, and since we need the corresponding arguments anyways, we will focus first on the case $k=1$, i.e.\ the first homology systole, and explain the necessary extensions to handle
 the general case afterwards.
 
%%%%%%%%%%%%%%%%%%%%%%%
\subsection{The singular set is non-empty}\label{sing_set}
%%%%%%%%%%%%%%%%%%%%%%%
 
We start with a simple observation concerning the maximum of the curvature of a sequence of minimal surfaces in $M$ with unbounded genus.
It says, that for a sequence of minimal surfaces of unbounded genus $\Sigma_j \subset M$, we necessarily have $\cS \neq \emptyset$.
This works without any assumption on the systole.

\begin{lemma} \label{lem_curv}
Let $\Sigma_j \subset (M,g)$ be a sequence of closed, embedded minimal surfaces with $\chi(\Sigma_j) \to -\infty$.
Then there is a sequence of points $z_j \in \Sigma_j$ such that $|A^{\Sigma_j}|^2(z_j) \to \infty.$
\end{lemma}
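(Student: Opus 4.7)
The plan is to argue by contradiction: suppose that, after passing to a subsequence, there is a uniform bound $\sup_j \sup_{\Sigma_j} |A^{\Sigma_j}|^2 \leq C_0 < \infty$, and show that this forces $|\chi(\Sigma_j)|$ to stay bounded, contradicting the hypothesis $\chi(\Sigma_j)\to -\infty$.

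Under such a uniform bound on the second fundamental form, elliptic regularity for the minimal surface equation in local graph coordinates, together with Arzel\`a--Ascoli and a diagonal argument, produces a further subsequence converging in $C^k_\loc(M)$ to a smooth minimal lamination $\cL$ of $M$; in the language of \cref{sec_cml}, the blow-up set $\cS$ is empty.

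The next step is to use positive Ricci curvature to constrain $\cL$ drastically. A direct second variation computation shows that $(M,g)$ admits no complete one-sided stable minimal surface, while every limit leaf of a minimal lamination is stable in the one-sided sense. Hence $\cL$ has no limit leaves, so all its leaves are isolated and therefore, by compactness of $M$, are closed minimal surfaces. Frankel's theorem---any two closed embedded minimal surfaces in a closed three-manifold of positive Ricci curvature must intersect---then forces $\cL$ to consist of a single leaf $\Sigma_\infty$. Applying the same principle to the connected components of $\Sigma_j$ itself shows that $\Sigma_j$ is connected.

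Finally, smooth convergence $\Sigma_j\to\Sigma_\infty$ realizes $\Sigma_j$, for large $j$, as a disjoint union of normal graphs over $\Sigma_\infty$ inside an arbitrarily thin tubular neighborhood of the limit. The main subtlety will be ruling out higher multiplicity: since $\Sigma_j$ is embedded \emph{and} connected, there can only be a single such graph, so the multiplicity of the convergence is one. Consequently $\Sigma_j$ is diffeomorphic to $\Sigma_\infty$ for large $j$, so $\chi(\Sigma_j)=\chi(\Sigma_\infty)$ stays uniformly bounded, giving the required contradiction.
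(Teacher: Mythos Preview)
Your approach is different from the paper's and, modulo one gap, it works. The paper argues via Gau\ss--Bonnet: from $|A|^2\le C$ and minimality it deduces $4\pi|\chi(\Sigma_j)|\le (C+2)\area(\Sigma_j)$, hence $\area(\Sigma_j)\to\infty$; it then lifts to the universal cover, passes to a limit lamination, and uses the diverging area to locate a leaf with stable universal cover, which is impossible in positive Ricci curvature. You instead argue that the limit lamination consists of a single closed leaf $\Sigma_\infty$ and that $\Sigma_j$ is eventually diffeomorphic to it. Your route avoids the area estimate and is arguably more geometric; the paper's route is more robust in that it never needs to pin down the multiplicity.

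The gap is in your multiplicity step. The assertion that smooth convergence writes $\Sigma_j$ as a \emph{disjoint} union of global normal graphs over $\Sigma_\infty$---so that connectedness forces a single sheet---presupposes that $\Sigma_\infty$ is two-sided. Nothing you have proved excludes a one-sided $\Sigma_\infty$: your stability argument only disposed of \emph{limit} leaves, and the single isolated leaf $\Sigma_\infty$ need not be stable (think of $\IRP^2\subset\IRP^3$). In the one-sided case a connected embedded $\Sigma_j$ can project to $\Sigma_\infty$ as a nontrivial double cover, so ``embedded and connected'' does not give multiplicity one. There are two easy fixes. Either run the whole argument in the universal cover $\tilde M$ (compact by Bonnet--Myers), where simple connectivity forces every closed embedded surface to be two-sided and the Frankel property still makes $\hat\Sigma_j=\pi^{-1}(\Sigma_j)$ connected; this is exactly what the paper does. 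Or note that multiplicity $m\ge 2$ would produce a positive Jacobi field on (the two-sided double cover of) $\Sigma_\infty$, making it stable and again contradicting $\Ric>0$.
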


\begin{proof}
Assume that there is a constant $C>0$, such that 
\begin{equation} \label{pt_curv_bd}
\sup_j \sup_{\Sigma_j}|A^{\Sigma_j}|^2\leq C.
\end{equation}
By scaling we may for simplicity assume that $|\sec(M)| \leq 1$.
Thus, by minimality and the theorem of Gau{\ss}--Bonnet, the total curvature satisfies
\begin{equation} \label{lem_curv_1}
\begin{split}
\int_{\Sigma_j} |A^{\Sigma_j}|^2 d\mu_{\Sigma_j} 
&=
-2\int_{\Sigma_j} (K^{\Sigma_j}-\sec(T_x \Sigma_j)) d\mu_{\Sigma_j}(x)
\\ 
& \geq
4\pi |\chi(\Sigma_j)|-2 \area(\Sigma_j).
\end{split}
\end{equation} 
On the other hand we have 
\begin{equation} \label{lem_curv_2}
\int_{\Sigma_j} |A^{\Sigma_j}|^2 d\mu_{\Sigma_j} \leq C \area(\Sigma_j)
\end{equation}
by assumption.
Combining \eqref{lem_curv_1} and \eqref{lem_curv_2}, we obtain
\begin{equation*}
4 \pi |\chi(\Sigma_j)| \leq (C+2) \area(\Sigma_j).
\end{equation*}
By assumption the left hand side tends to infinity, therefore we find that
\begin{equation*}
\area(\Sigma_j)\to \infty
\end{equation*}
as $j \to \infty$.

\smallskip
We consider the universal covering $\pi \colon \tilde M \to M$, where $\tilde M$ is compact by the Bonnet-Myers theorem.
Clearly, the minimal surfaces 
$$
\hat \Sigma_j := \pi^{-1}(\Sigma_j)
$$ 
also satisfy the pointwise curvature bound \eqref{pt_curv_bd}
and have diverging area,
\begin{equation} \label{eq_area_diverge}
\area(\hat \Sigma_j) \to \infty.
\end{equation}
The pointwise curvature bound \eqref{pt_curv_bd} allows us to pass to a subsequence (not relabeled) such that
\begin{equation*}
\hat \Sigma_j \to \cL \ \text{in} \ C^{0,\alpha}(\tilde M),
\end{equation*} 
where $\cL$ is a Lipschitz lamination, whose leaves are smooth, complete minimal surfaces.
Moreover, since $\area(\Sigma_j) \to \infty$, 
we can conclude that there needs to be at least one leaf
$\Gamma$ with stable universal cover, which also implies that $\Gamma$ is compact, hence diffeomorphic to $S^2$ thanks to \cite{FS} and \cite{SY83}.
For the convenience of the reader we include the argument here following the proof of \cite[Theorem 1.3]{CKM}.

By passing to another subsequence and using \eqref{eq_area_diverge} we find that there has to be a point $p \in \Gamma$ such that
$$
\liminf_{j \to \infty} \area(\hat \Sigma_j \cap B(p,r)) \to \infty
$$
for any $r>0$.
Since $\hat \Sigma_j$ are embedded and by the curvature bound \eqref{pt_curv_bd} this implies that for $j$ sufficiently large and $r>0$ sufficiently small (but only depending on the ambient geometry and the curvature bound),
we have that $\hat \Sigma_j \cap B(p,r)$ is given as the union of $n(j) \to + \infty$
graphical components over $\Gamma \cap B(p,r)$.
Let $U \subset \tilde \Gamma$ be bounded and simply connected with $\tilde p \in U$, where $\tilde p$ projects to $p$.
Using the curvature bound, a covering argument and the standard elliptic theory we find that for $j$ sufficiently large we can find at least two functions
$v_{1,j},v_{2,j}$ (out of the lifts of those $n(j)$-many above) defined on $U$ such that the graphs define disjoint minimal surfaces over $U$ and
$\inf |v_{2,j}-v_{1,j}| \to 0$.
Using the Harnack inequality we find that $w_j =  \inf |v_{2,j}-v_{1,j}|^{-1} (v_{2,j}-v_{1,j})$ converges to a non-trivial signed solution of the Jacobi equation, hence $U$ is stable.
Since this applies to any such $U$ it follows that $\tilde \Gamma$ is stable.

It follows from \cite{FS} and \cite{SY83}, that for any disk $D \subset \tilde \Gamma$ and any $z \in D$ we have that
$$
d(x,\partial D) \leq \frac{2 \pi \sqrt{2}}{\sqrt{3 \kappa_0}},
$$
where $\Scal\geq\kappa_0>0$ on $D$. 
Since this applies to any such disk it follows that $\tilde \Gamma$ is compact, hence a sphere.
Since $\tilde M$ is simply connected, it does not contain any embedded real projective plane.
Therefore, we need to have $\tilde \Gamma = \Gamma$.
In particular, $\Gamma$ is a closed, two sided, stable minimal surface in $\tilde M$, which gives the desired contradiction.
\end{proof}

\begin{rem} \label{rem_sys}
Under the additional assumption that $\sys^h(\Sigma_j) \geq l_0>0$ we could have used \cref{thm_sys_ratio} instead of the theorem of Gau{\ss}-Bonnet to obtain that $\area(\Sigma_j)$ has to be unbounded.
However, this relies on the assumption on the systole and is less elementary.
We will exploit such kind of argument below in the proof of the existence of multiple pinching curves.
\end{rem}

%%%%%%%%%%%%%%%%%%%%%%%
\subsection{Localized systole and contractibility radius I}
%%%%%%%%%%%%%%%%%%%%%%%

We now start to aim for \cref{thm_intrinsic} for $k=1$, i.e.\ we show that there is at least one homologically non-trivial curve that becomes arbitrarily short.
By \cref{lem_curv}, in order to prove \cref{thm_intrinsic} using a contradiction argument invoking a limit lamination, we are forced to study the structure of a limit lamination of $(\Sigma_j)_{j \in \IN}$ in the presence of a non-empty singular set.
In this subsection we use the global positivity of the Ricci curvature to rule out rather general neck-pinch singularities under appropriate assumptions.

\smallskip

We now fix $r_0>0$ sufficiently small such that firstly the results from \cref{chord_arc} apply
in any ball $B(x,r_0)$ and, secondly, all balls $B(x,r) \subset M$ with $r \leq r_0$ have strictly mean convex boundary.

For an embedded closed surface $\Sigma \subset M$ and a point $x \in M$ we write
$$
C(x,r) = C^\Sigma(x,r) = \{ c \colon S^1 \to \Sigma \cap B(x,r)  \ : \ 0 \neq [c] \in \pi_1(\Sigma \cap B(x,r)) \}.
$$
Note that $c \in C(x,r)$ could still be globally contractible in $\Sigma$.
We also write
$$
C(x) = C^\Sigma (x) = C(x,r_0)
$$

\begin{defn}
We call
$$
c(\Sigma) = 
\inf_{x \in M} \sup\{ r > 0 \ : \pi_1(\Sigma \cap B(x,r),x)=0\} 
$$
the \emph{contractibility radius} of $\Sigma$
and
$$
\sys_r(\Sigma)=
\inf_{x \in M} \inf_{c \in C(x,r)} \length(c)
$$
the \emph{$r$-local systole} of $\Sigma$.
\end{defn}

While the two definitions seem closely related, there is an important difference to be pointed out.
Note that both of these are defined by looking at the intersection of $\Sigma$ with extrinsic balls.
However, the $r$-local systole still refers to intrinsic distances, i.e.\ we only localize extrinsically.
Also observe that we have
$$
\sys_r(\Sigma) = \inf_{x \in \Sigma} \sys(\Sigma \cap B(x,r)).
$$

Note that both the contractibility radius and the $r$-local systole refer to extrinsic balls of radius $r$.
One of the main challenge of our arguments is that in general there might be no strong connection between the systole (which is intrinsic) and the extrinsic $r$-local systole as indicated in \cref{fig_sys}.

\begin{figure}[ht] 
	\centering
  \includegraphics[width=6cm]{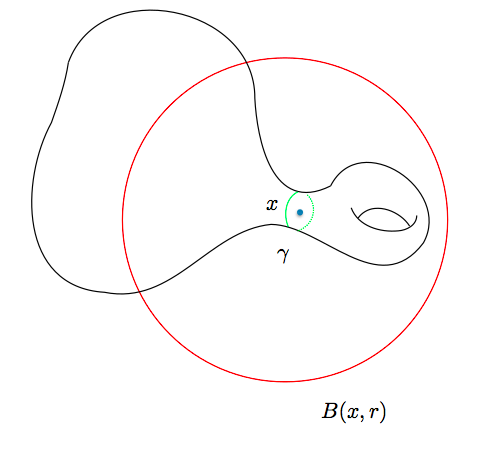}
	\caption{A surface with small $r$-local systole but not too small systole as the curve $\gamma$ is globally contractible.}
	\label{fig_sys}
\end{figure}

The main goal of this subsection is to show that the control on the homology systole gives control on the contractibility radius.
As an intermediate step we first obtain control on the localized systole in the next two lemmata.

The next lemma is our key scale breaking argument indicated in \cref{fig_max_princ}.
Via the maximum principle we transfer some connectedness properties from the scale of certain singularities of a limit lamination to a definite scale.
Given a very short and separating curve we show that both connected components of the complement have to extend a definite amount away.

\begin{lemma} \label{lem_good_barrier}
Let $\Sigma \subset M$ be a closed minimal surface such that
$
\sys^h(\Sigma) \geq l_0.
$
There is $l_1=l_1(M,l_0) \leq \min(r_0,l_0/2)$ 
with the following property.
Suppose that
$
\sys_{r_0}(\Sigma) \leq l_1
$
and that $c \in C(x)$ is a simple closed curve for some $x \in M$ such that
$$
\length(c) \leq 2 \sys_{r_0}(\Sigma).
$$
Then $\Sigma \setminus c$ has two connected components $\Sigma_1$ and $\Sigma_2$ and these satisfy
\begin{equation*}
\Sigma_i \cap \partial B(x,r_0) \neq \emptyset
\end{equation*}
for $i=1,2$.
\end{lemma}

\begin{figure}[ht]
	\centering
  \includegraphics[width=10cm]{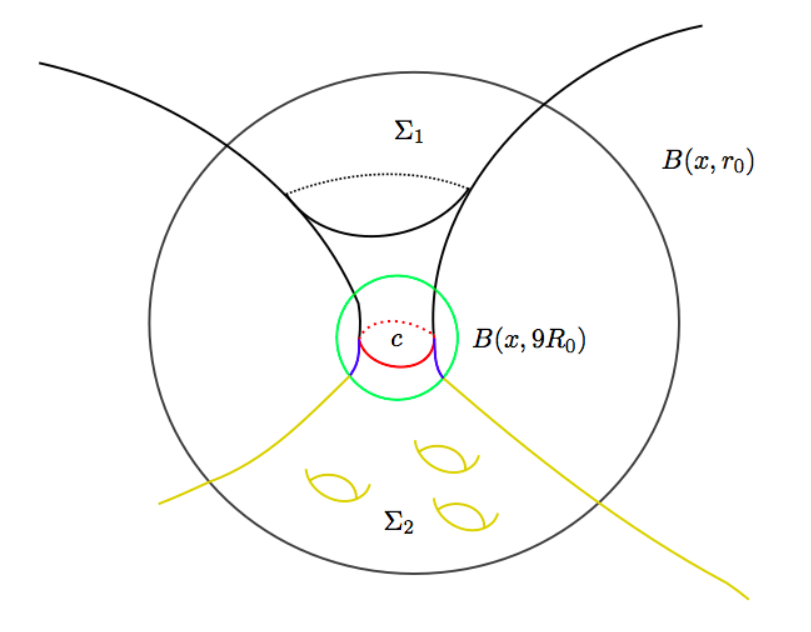}
	\caption{The proof of \cref{lem_good_barrier}: Because $B(x,r)$ is mean convex for any $r \leq r_0$ at least one component of $\Sigma \setminus c$ has to leave $B(x,r_0)$, say $\Sigma_1$. Once we can show that $\Sigma_2$ is forced to leave $B(x,9R_0)$ (still on the scale of $c$), the maximum principle gets us all the way to $\partial B(x,r_0)$.
	}
	\label{fig_max_princ}
\end{figure}

\begin{proof}
Write $R_0=\length(c)/8$ and assume that $R_0 \leq \min(r_0/16, l_0/16)$.
Clearly, by the choice of $c$ this implies that $\Sigma \setminus c$ has two connected components, denoted $\Sigma_1$ and $\Sigma_2$
with $\partial \Sigma_i = c$.
Also note that since $x \in c$ we have that 
\begin{equation} \label{eq_bdry}
\partial \Sigma_i \subset B(x,4R_0) \subset B(x,r_0/4).
\end{equation}

We first show that these choices imply that there is no non-trivial topology on intrinsic scales below $R_0$.
More precisely, we let $y \in \Sigma \cap B(x,r_0/2)$ and claim that there is a unique disk $D_y \subset \Sigma \cap B(x,r_0)$ with 
\begin{equation*}
B^\Sigma(y,R_0) \subset D_y \ \ \text{and} \ \ \partial D_y \subset \partial B^\Sigma(y,R_0).
\end{equation*}
This can be seen as follows.
By \cref{lem_gen_fund}, if there were a curve $\sigma \subset B^\Sigma(y,R_0)$,
that is non-contractible on scale $r_0$ at $x$,
we could find a simple closed, curve $\sigma' \subset B^\Sigma(y,R_0)$ also non-contractible on scale $r_0$ at $x$ with
\begin{equation*}
\length(\sigma') \leq 3R_0 < \length(c)/2 \leq \sys_{r_0}(\Sigma),
\end{equation*}
by our choice of the curve $c$, but this is impossible by the definition of the $r_0$-local systole.
We conclude that any simple closed curve contained in $B^\Sigma(y,R_0)$ admits a filling disk
contained in $\Sigma \cap B(x,r_0)$, from which the existence of $D_y$ follows.
If $\Sigma$ is not a sphere, it follows immediately that such a disk is unique.
In the case of $\Sigma$ being a sphere there are two such disks in $\Sigma$.
However, by the choice of $r_0$ and the maximum principle not both of these disks can be entirely contained in $B(x,r_0)$.

It follows from \cref{thm_chord_arc} and the convex hull property, that we can find some small $\alpha>0$ such that
\begin{equation*}
\Sigma \cap B(y,\alpha R_0) \ \text{consists of disks for any} \ y \in B(x,r_0/2).
\end{equation*}
Now choose $k \in \IN$ such that $k \alpha \geq 9$ and let $\beta_k>1$ be given by \cref{thm_chord_arc_2}.
First assume that we can find $z \in \Sigma_i$ such that 
\begin{equation} \label{eq_assum}
B^\Sigma(z,\beta_k \alpha R_0) \cap \partial \Sigma_i = \emptyset.
\end{equation}
Also assume that
\begin{equation} \label{eq_assum_2}
B^\Sigma(z,\beta_k \alpha R_0) \subset B(x,r_0/2),
\end{equation}
since the conclusion otherwise follows from the maximum principle thanks to \eqref{eq_bdry}.
Under these assumptions it follows from \cref{thm_chord_arc_2} that 
$$
\partial( (B^\Sigma(z,\beta_k \alpha R_0))_{z,9R_0} )\subset \partial B(z,9R_0),
$$
which clearly implies that 
$$
\Sigma_i \cap \partial B(x,9R_0) \neq \emptyset,
$$
since $\Sigma_i$ is connected.
Since on the other hand $\partial \Sigma_i \subset B(x,4R_0)$ we then find from the maximum principle that
$$
\Sigma_i \cap \partial B(x, r_0) \neq \emptyset.
$$

We still need to justify why we can assume \eqref{eq_assum}.
Take $l_1$ such that 
$
\beta_k \alpha R_0 \leq 16 \beta_k \alpha l_1 \leq r_0/12.
$
If with these choices \eqref{eq_assum} fails for any $z \in \Sigma_i$ we then have that
$$
\diam(\Sigma_i) \leq 32 \beta_k \alpha l_1 \leq r_0/6.
$$

Suppose first that $\Sigma_i$ is a disk. 
In this case the diameter estimate implies that $c$ is contractible on scale $r_0$ contradicting our choice of $c$.

If $\Sigma_i$ is not a disk it contains at least one non-separating curve $d$, since $\partial \Sigma_i$ is connected.
Thanks to the diameter estimate \cref{cor_gen_hom} then implies that we can find a non-separating curve $d'$ having
\begin{equation*}
\length(d') \leq 48 \beta_k \alpha R_0 < l_0,
\end{equation*}
contradicting the assumptions.
\end{proof}

Below, we solve a Plateau problem in $M \setminus \Sigma$ with boundary given by a curve $c$ as above.
In this situation, \cref{lem_good_barrier} implies that $\Sigma$ is a useful barrier.

\begin{lemma} \label{lem_no_necks}
Given $l_0>0$ there is $l_2=l_2(M,l_0)>0$ with the following property.
Let $\Sigma \subset M$ is a closed minimal surface with $\sys^h(\Sigma) \geq l_0$, then the
 $r_0$-local systole satisfies 
$$
\sys_{r_0}(\Sigma) \geq l_2.
$$
 \end{lemma}
 
 Note that this achieves two things simultaneously.
 Firstly, it shows that the systole is bounded away from zero if the homology systole is.
 Secondly, we also find that curves of controlled and very short, but potentially on a much smaller scale than the systole, can be contracted in an extrinsically controlled neighbourhood.

This corresponds to the fact, already present in the argument for \cref{lem_good_barrier}, that the proof handles two types of curves.
On the one hand, applied to homologically trivial non-contractible curves, this implies that the homology systole of a sequence $\Sigma_j$ tends to $0$
if we can show that the systole does so.
On the other hand, we will apply it to short curves bounding (large) disks in $\Sigma_j$ in order to understand the convergence of $\Sigma_j$ to a limit lamination.

\begin{figure}[ht]
	\centering
  \includegraphics[width=10cm]{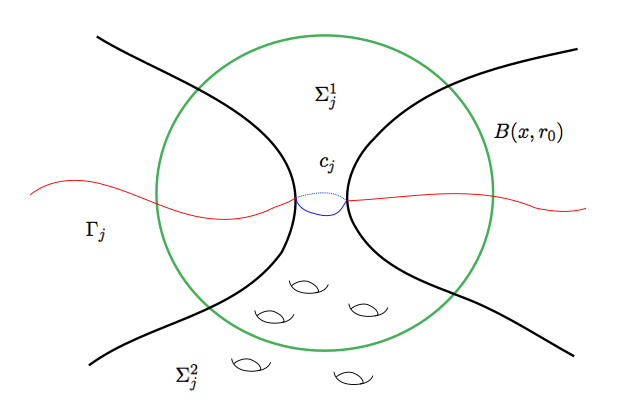}
	\caption{The construction in the proof of \cref{lem_no_necks}. The surface $\Sigma_j$ is a good barrier for the Plateau problem: Both components of $\Sigma_j \setminus c_j$ extend out of $B(x,r_0)$ by \cref{lem_good_barrier}.}
	\label{fig}
\end{figure}

\begin{proof}
Let us first consider the case of $M$ being simply connected.
Afterwards we reduce the general case to this special case.
We argue by contradiction and assume that we can find a sequence of minimal surfaces $(\Sigma_j)_{j \in \IN}$ such that
\begin{enumerate}
\item[(1)] All non-separating curves in $\Sigma_j$ have length at least $l_0$, i.e.\ $\sys^h(\Sigma_j) \geq l_0$.
\item[(2)] We have $\sys_{r_0}(\Sigma_j) \to 0$.
\end{enumerate}

Up to taking a subsequence we then find $x \in M$, radii $r_j \to 0$, and simple closed curves $c_j \in C^{\Sigma_j}(x,r_j)$ such that 
\begin{equation*}
\length(c_j) \leq 2 \sys_{r_0}(\Sigma_j) \to 0.
\end{equation*}

Since $M$ is simply connected, $\Sigma_j$ separates $M$ into two mean-convex connected components
 \begin{equation*}
 M \setminus \Sigma_j=M_j^1 \cup M_j^2.
 \end{equation*}
 Clearly, once $j$ is large enough such that $4\sys_{r_0}(\Sigma_j) \leq l_0$, we have that $c_j$ is null homologous in the closure of both of these components.

 In addition, we claim that at least one of $M^1_j$ and $M^2_j$ has the following property:
 If $\length(c_j)\leq l_1$ from \cref{lem_good_barrier}, then
 any surface $S\subset M^i_j$ with $\partial S =c_j$ satisfies
 \begin{equation} \label{int}
 S \cap \partial B(x,r_0) \neq \emptyset.
 \end{equation} 
 If this was not the case, we would find $S_j^1 \subset M_j^1 \cap B(x,r_0)$ and $S_j^2 \subset M_j^2 \cap B(x,r_0)$ such that $\partial S_j^i = c_j$.
The surface $S_j=S_j^1 \cup S_j^2 \subset B(x,r_0)$ is a closed surface and separates $B(x,r_0)$ into two connected components.
Moreover \eqref{int} does not hold for $S$, so that one of these components is contained in $B(x,r_0-\delta)$ for some small $\delta>0$.
By construction, this component contains a component of $\Sigma_j \setminus c_j$ contradicting \cref{lem_good_barrier}.

Let $M_j^1$ be the component having property \eqref{int}.
By \cite{HS79} we can find a stable minimal surface $\Gamma_j \subset M_j^1$ with $\partial \Gamma_j = c_j$ which minimizes
area among all surfaces in $M_j^1$ which have boundary $c_j$.
It follows from \eqref{int} that 
\begin{equation} \label{non_trivial}
\Gamma_j \cap \partial B(x,r_0) \neq \emptyset
\end{equation}
for $j$ sufficiently large.
Moreover, by the curvature estimates \cite{Sc83}, there is a constant $C$ such that
\begin{equation*}
\sup_j \sup_{\Gamma_j \cap (M \setminus B(x,r))} (r-r_j)^2 |A^{\Gamma_j}|^2 \leq C
\end{equation*}
for any $r>r_j$.
In particular, we can pass to a subsequence such that 
\begin{equation*}
\Gamma_j \to \cL
\end{equation*}
in $C^{0,\alpha}_{\loc}(M \setminus \{x\})$, where $\cL$ is a minimal Lipschitz lamination.
Since $\Gamma_j$ is stable, the same argument as in \cite[Lemma 4.1]{CKM} implies
\footnote{If all leaves are two-sided this follows immediately from \cite[Proposition D.3]{CKM}. The argument in \cite[Lemma 4.1]{CKM} explains how this can be assumed.}
that the lamination $\cL$ extends to a lamination $\tilde \cL$ across $\{x\}$.
By the log cut-off trick it follows that also $\tilde \cL$ has stable leaves.
From \eqref{non_trivial}, we find that there is a leaf $\bar \Gamma \subset \tilde  \cL$
with 
\begin{equation*}
\bar \Gamma \cap \partial B(x,r_0) \neq \emptyset.
\end{equation*}
In particular, $\bar \Gamma$ is non-empty.
Moreover, invoking \cite{FS} and \cite{SY83} once again, $\bar \Gamma$ is closed.
Thus, since $M$ is simply connected, we find that $\bar \Gamma$ is two-sided.
Since $M$ has positive Ricci curvature, this is a contradiction since $\bar \Gamma$ is a non-empty, two-sided, closed, stable minimal surface in $M$.

\medskip

We now consider the general case in which we can assume that $M$ is not simply connected.
We can pass to the universal covering $\pi \colon \tilde M \to M$, which is
compact by the Bonnet--Myers theorem.
In particular, there is a finite group $G$ acting freely on $M$ such that $M=\tilde M/G$.
We obtain minimal surfaces 
\begin{equation*}
\hat \Sigma_j =\pi^{-1}(\Sigma_j) \subset \tilde M.
\end{equation*}
Since $M$ has positive Ricci curvature, by the Frankel property, the surfaces $\hat \Sigma_j$ are connected.

We may assume that $r_0$ is chosen sufficiently small such that
\begin{equation*}
g(B(x,r_0)) \cap B(x,r_0)=\emptyset
\end{equation*}
for any $g \in G \setminus \{e\}$.
If there is a non-contractible curve $c_j \subset \Sigma_j \cap B(x,r_0)$, with
\begin{equation*}
\length(c_j) \leq l_0,
\end{equation*}
we may again assume that $c_j$ is chosen to have properties (1) and (2) from above.
It follows from our assumption that $c_j$ is separating.
Therefore, by \cref{lem_cov_sep}, also $\hat c_j:=\pi^{-1}(c_j)$ is separating.
Moreover, by the choice of $r_0$, and recalling $l_0 \leq r_0$, we see that $\hat c_j$ consists of $|G|$ disjoint, closed curves.
We can now argue exactly as above and
and minimize area in the correct component of $\tilde M \setminus \hat \Sigma_j$ relative to the boundary $\hat c_j$.
Finally, by \cref{lem_good_barrier}
\footnote{We apply this to $\Sigma_j$ and observe that this trivially implies \eqref{non_trivial} for $\hat \Sigma_j$.},
the limit lamination will be non-empty and we can conclude as in the first case.
\end{proof}

\begin{rem}
For curves that are non-contractible in $\Sigma \cap B(x,r)$ but contractible in $\Sigma$, it should be possible to extend \cref{lem_good_barrier} to bumpy metrics of positive scalar curvature.
In this situation one component of $\Sigma_j \setminus c_j$ is a planar domain and one can write large parts of this component as graph over $\Gamma_j$.
This can then be used to construct a non-trivial Jacobi field on $\Gamma$
\end{rem}

\begin{prop} \label{prop_ulsc}
For any $l_0>0$ there is $r_1>0$ such that for any closed minimal surface $\Sigma \subset M$ with $\sys^h(\Sigma) \geq l_0$ we have for the contractibility radius that
$c(\Sigma) \geq r_1$.
\end{prop}

\begin{proof}
If we apply \cref{lem_no_necks} to $\Sigma$ we get some $l_2>0$ such that all curves in $\Sigma$ of length at most $l_2$ are contractible in the intersection of $\Sigma$ with some mean convex ball $B(x,r_0)$.
In particular, it follows from \cref{lem_gen_fund} that any intrinsic ball $B^\Sigma(z,l_2/3)$ is contained in some disk $D_z$ with
\begin{equation*}
B^\Sigma(z,l_2/3) \subset D_z \subset \Sigma_j \cap B(z,r_0).
\end{equation*} 
The claim now easily follows with $r_1=\alpha l_2 /3$ from \cref{thm_chord_arc}, where also $\alpha>0$ is from \cref{thm_chord_arc}.
\end{proof}

%%%%%%%%%%%%%%%%%%%%%%%
\subsection{The first homology systole} 
%%%%%%%%%%%%%%%%%%%%%%%
At this stage we are in position to prove the special case $k=1$ of our main result.

\begin{proof}[Proof of \cref{thm_intrinsic} for $k=1$]
We argue by contradiction and assume that we have sequence of minimal surfaces $\Sigma_j \subset M$ with $-\chi(\Sigma_j) \to \infty$ and
\begin{equation*}
\sys^h(\Sigma_j) \geq l_0>0
\end{equation*} 
for some positive constant $l_0$.
Thanks to \cref{prop_ulsc} we find that the sequence $(\Sigma_j)$ is ulsc, i.e.\
\begin{equation} \label{claim_ulsc}
\Sigma_j \cap B(x,r_1) \ \text{consists of disks for any} \ x \in M.
\end{equation}
Clearly, after potentially decreasing $r_1$, \eqref{claim_ulsc} holds for the surfaces $\hat \Sigma_j \subset \tilde M$ as well.
Therefore, it suffices to derive a contradiction from \eqref{claim_ulsc} if $M$ is simply connected.

Thanks to \eqref{claim_ulsc} and \cite{Wh15}(see also \cite{CM5} which gives  Lipschitz curves), we can pass to a subsequence such that 
\begin{equation*}
\Sigma_j \to \cL \ \text{in} \ M \setminus \cS
\end{equation*}
outside the singular set $\cS$ which is contained in a union of $C^1$-curves.
It follows from \cref{lem_curv}, that $\cS \neq \emptyset$.
In particular, we can pick $x \in \cS$ and the associated collapsed leaf $\Gamma_x$.
Moreover, since $\Gamma_x$ is a limit leaf of $\cL$ it is stable by \cite{MPR10}.
It follows from \cref{prop_collapsed} that $\Gamma_x$ extends to a complete minimal surface $\bar \Gamma$ in $M$ and that $\cS \cap \bar \Gamma$ is discrete.
In particular, also $\bar \Gamma$ is stable and by \cite{FS} and \cite{SY83}, its universal cover is diffeomorphic to $S^2$.
Since $M$ is simply connected, it does not contain any one-sided surfaces and we conclude that $\bar \Gamma$ is a two-sided, closed, stable minimal surface in $M$.
This is clearly a contradiction, since $M$ has positive Ricci curvature.
\end{proof}
 
%%%%%%%%%%%%%%%%%%%%%%%
\section{Existence of multiple short curves}\label{main_2}
%%%%%%%%%%%%%%%%%%%%%%%
We now proceed to the proof of the general case of \cref{thm_intrinsic}.

Recall that we assume $M$ to be a closed three-manifold with positive Ricci curvature.
Assume we have a sequence of minimal surfaces $(\Sigma_j )_{j \in \IN}$ in $M$ with the following properties.
There is a natural number $k\geq 1$ and for each $j$ a set $\{c_j^1,\dots, c_j^k\}$ of simple closed curves in $\Sigma_j$ such that
\begin{itemize}
\item[(1)] $\length(c_j^i) \to 0$ for $i=1,\dots,k$ as $j \to \infty$,
\item[(2)] $\rk \langle [c_j^1], \dots, [c_j^k] \rangle = k$ in $H_1(\Sigma_j;\IZ/2\IZ)$,
\item[(3)] there is $l_0>0$ such that if a closed curve $d_j \subset \Sigma_j$ has $\length(d_j) \leq l_0$,
then $[d_j] \in \langle [c_j^1], \dots, [c_j^k] \rangle$.
\end{itemize}
Note that (3) allows for $[d_j]=0$. 

By taking a subsequence we may assume that 
$c_j^i \subset B(x_i,r_j)$ for a sequence of radii $r_j \to 0$.

We now follow the same steps that we used for the case $k=1$, but have to deal with several new difficulties.

%%%%%%%%%%%%%%%%%%%%%%%
\subsection{Additional points in the singular set}
%%%%%%%%%%%%%%%%%%%%%%%
In a first step we show that the singular points arising from the curves $c_j^i$ do not comprise the entire singular set.
This is the analogue of \cref{lem_curv}.
In contrast to \cref{lem_curv} the argument in this case relies on the assumption on the homology systole.

\begin{lemma} \label{mult_sing_set}
We have $\cS \cap M \setminus  \cup_{i=1}^k B(x_i,r_0) \neq \emptyset$ for some $r_0>0$.
\end{lemma}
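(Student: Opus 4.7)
My plan is to argue by contradiction, splitting into two cases depending on whether the areas of $\Sigma_j$ remain bounded. Assume for contradiction that $\cS \cap (M \setminus \bigcup_i B(x_i,r_0)) = \emptyset$ for every $r_0 > 0$, so that $\cS \subset \{x_1,\ldots,x_k\}$ and $|A^{\Sigma_j}|$ is uniformly bounded on every compact subset of $M \setminus \{x_1,\ldots,x_k\}$. Combined with \cref{mult_ulsc}, after passing to a subsequence we obtain smooth convergence $\Sigma_j \to \mathcal{L}$ on compact subsets of $M \setminus \{x_1,\ldots,x_k\}$, where $\mathcal{L}$ is a smooth minimal lamination. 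After further extracting a subsequence, either $\operatorname{area}(\Sigma_j)$ is uniformly bounded or $\operatorname{area}(\Sigma_j) \to \infty$.

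In the first case, since $|\chi(\Sigma_j)| \to \infty$ we have $\operatorname{area}(\Sigma_j) = o(|\chi(\Sigma_j)|)$, so \cref{thm_sys_ratio} applied with index $k+1$ yields $\sys^h_{k+1}(\Sigma_j) \to 0$. Hence, for $j$ sufficiently large, there exist $k+1$ simple closed curves on $\Sigma_j$ of length at most $l_0$ whose classes are linearly independent in $H_1(\Sigma_j;\IZ/2\IZ)$. By property~(3) of the sequence, however, all closed curves of length at most $l_0$ have classes in the $k$-dimensional subspace $\langle [c_j^1],\ldots,[c_j^k] \rangle$, which contradicts the linear independence.

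In the second case, mimicking the proofs of \cref{lem_curv} and of \cref{thm_intrinsic} in the $k=1$ case, I pass to the universal cover $\pi \colon \tilde M \to M$, which is compact by Bonnet--Myers and still of positive Ricci curvature, and consider the connected (by Frankel's property) lifts $\hat\Sigma_j := \pi^{-1}(\Sigma_j)$, whose areas also diverge and whose blow-up sets are contained in the finite set $\pi^{-1}(\{x_1,\ldots,x_k\})$. The uniform curvature bound on compact subsets of $\tilde M \setminus \pi^{-1}(\{x_1,\ldots,x_k\})$ yields smooth convergence $\hat\Sigma_j \to \hat{\mathcal{L}}$ on this punctured manifold. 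The standard sheet-counting argument (cf.\ the proof of Theorem 1.3 in \cite{CKM}) then produces a leaf $\Gamma$ of $\hat{\mathcal{L}}$ whose two-sided cover is stable. By \cite{FS} and \cite{SY83}, the universal cover of $\Gamma$ is diffeomorphic to $S^2$, and since $\tilde M$ is simply connected and orientable, $\Gamma$ itself is an embedded $S^2$; as a compact subset of the open set $\tilde M \setminus \pi^{-1}(\{x_1,\ldots,x_k\})$ it is also a closed subset of $\tilde M$, giving a closed, two-sided, stable minimal sphere in $\tilde M$, contradicting positive Ricci curvature.

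The main obstacle is the second case: one must verify that the sheet-counting argument of \cref{lem_curv} still produces a stable leaf on the punctured manifold (the global pointwise curvature bound used there is unavailable), and that the resulting stable leaf cannot accumulate at any of the punctures $\pi^{-1}(\{x_1,\ldots,x_k\})$. The latter is automatic once one knows the leaf is a compact $S^2$ by \cite{FS,SY83}, so the real work is to check that the area divergence, together with the local smooth convergence on a compact manifold with finitely many punctures, is enough to trigger the limit-leaf accumulation mechanism of \cite{MPR10}.
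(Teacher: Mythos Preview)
Your overall strategy matches the paper's: reduce to the unbounded-area case via \cref{thm_sys_ratio} (your Case~1 argument with index $k+1$ is exactly what the paper means by ``we can assume $\area(\Sigma_j)$ is unbounded''), and in that case produce a stable limit leaf contradicting positive Ricci curvature.

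However, the two obstacles you flag are not actually resolved in your plan, and your treatment of the second one is circular.

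For the first obstacle---why diverging \emph{total} area forces sheet accumulation \emph{away} from the punctures---the paper supplies the missing ingredient: the monotonicity formula. After scaling so that $|\sec(M)|\leq 1$ and writing $B_s=\bigcup_i B(x_i,s)$, one has
\[
\area\bigl(\Sigma_j\cap(B_{2r_0}\setminus B_{r_0})\bigr)\ \geq\ \Bigl(\tfrac{4}{e^{2r_0}}-1\Bigr)\,\area(\Sigma_j\cap B_{r_0})\ \geq\ \area(\Sigma_j\cap B_{r_0})
\]
for $r_0\leq \tfrac12\log 2$, whence $2\,\area(\Sigma_j\setminus B_{r_0})\geq \area(\Sigma_j)\to\infty$. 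This is what guarantees that the sheet-counting argument of \cref{lem_curv} produces a limit leaf in the punctured manifold; without it, all the area could in principle concentrate near the punctures, and local smooth convergence alone does not trigger \cite{MPR10}.

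For the second obstacle, your reasoning is circular: you invoke \cite{FS,SY83} to conclude that the stable leaf $\Gamma$ has universal cover $S^2$, hence is compact, hence avoids the punctures. But \cite{FS,SY83} apply only to \emph{complete} stable minimal surfaces, and a leaf of a lamination on the punctured manifold need not be complete---it may have ends running into the punctures. The paper's fix is to first use stability together with a removable-singularity result (as in \cite[Lemma~4.1]{CKM}) to extend $\Gamma$ smoothly across the finite set $\{x_1,\dots,x_k\}$, and then to apply the log-cut-off trick to upgrade stability (a priori known only for variations supported away from the punctures) to stability of the extended complete surface. Only then do \cite{FS,SY83} apply and yield the closed two-sided stable minimal surface that contradicts positive Ricci curvature.
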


\begin{proof}
Assume that $\cS \subset \{x_1,\dots,x_k\}$.
By \cref{thm_sys_ratio},  we can assume that $\area(\Sigma_j)$ is unbounded.

For simplicity, let us scale $M$ to have $|\textup{sec}|\leq 1$, and write $B_s=\cup_{i=1}^k B(x_i,s)$.
The monotonicity formula then implies
\begin{equation*}
\begin{split}
\area(\Sigma_j \cap (B_{2r_0}\setminus B_{r_0}))
&=
\area(\Sigma_j \cap B_{2r_0})-\area(\Sigma_j \cap B_{r_0})
\\
& \geq \left(\frac{4}{e^{2r_0}}-1 \right) \area(\Sigma_j \cap B_{r_0})
\\
& \geq  \area(\Sigma_j \cap B_{r_0})
\end{split}
\end{equation*}
if $r_0 \leq \log (2)/2$,
which in turn implies
\begin{equation} \label{eq_mult_area}
\begin{split}
2\area(\Sigma_j \setminus B_{r_0})
&\geq
\area(\Sigma_j \setminus B_{r_0}) 
+ 
\area(\Sigma_j \cap (B_{2r_0}\setminus B_{r_0}))
\\
&\geq
\area(\Sigma_j) \to \infty.
\end{split}
\end{equation} 
Now we can argue exactly as in \cref{lem_curv} and obtain a limit lamination $\cL \subset M \setminus \cS$.
Thanks to \eqref{eq_mult_area} we can conclude that $\cL$ has a leaf with stable universal cover.
We then use stability to extend it across the isolated singularities $\cS$ and eventually
use the log-cut off trick to conclude that this is still stable, which gives the desired contradiction.
\end{proof}

%%%%%%%%%%%%%%%%%%%%%%%
\subsection{Localized systole and contractibility radius II}
%%%%%%%%%%%%%%%%%%%%%%%
In a next step we prove that $\Sigma_j$ is ULSC off the set $\{x_1,\dots x_k\}$.

\begin{prop} \label{mult_ulsc}
Assume $(\Sigma_j)_{j \in \IN}$ is as above. 
Given $r>0$ there is $r_2=r_2(M,g,l_0,r)$ such that we have for the contractibility radius that
$c(\Sigma_j \cap (M \setminus \cup_{i=1}^k B(x_i,4r)) \geq r_2$
for $j$ sufficiently large.
\end{prop}

We want to follow the same strategy that we used to obtain \cref{prop_ulsc}, for which in turn \cref{lem_good_barrier} was the key input.
Because of the short curves $c_j^i$, we need to be more careful in how we select the scale on which we work.
Recall that in the case of \cref{lem_good_barrier} this was the smallest intrinsic scale of non-trivial topology.

In order to find the correct scale, we define functions $l_j, f_j \colon \Sigma_j \to [0,\infty)$ as follows.
For $x \in \Sigma_j$, we consider the set $C_j'$ of curves in $\Sigma_j$ given by
\begin{equation*}
C_j'(x):=\{c \colon S_1  \to \Sigma_j \, | \,  0\neq [c] \in \pi_1(\Sigma_j \cap B(x,r_0),x)  , 0=[c] \in H_1(\Sigma_j;\IZ/2\IZ)  \}.
\end{equation*}
Then the first functions is defined via
\begin{equation*}
l_j(x):=\min\{1, \inf \{\length(c) \ \lvert \ c \in C_j'(x)\}\},
\end{equation*}
and $f_j$ is a scale invariant version of (the inverse of) this, that incorporates the distance to the short curves $c_j^i$, given by
\begin{equation*}
f_j(x)= l_j(x)^{-1} \dist(x, c_j^1 \cup \dots \cup c_j^k).
\end{equation*}

\begin{proof}[Proof of \cref{mult_ulsc}]

We argue by contradiction and assume that we can find a simple closed curve $d_j \subset \Sigma_j$ such that 
\begin{equation} \label{d_short}
\length(d_j) \to 0,
\end{equation}
and
\begin{equation} \label{d_off}
d_j \subset M \setminus \bigcup_{i=1}^k B(x_i,2r),
\end{equation}
but 
\begin{equation}\label{d_nc}
d_j \ \text{is non-contractible on scale} \  r_0.
\end{equation}
If we can not find such a curve, the assertion follows from \cref{thm_chord_arc} combined with \cref{lem_gen_fund} and the convex hull property exactly as in the proof of \cref{prop_ulsc}.

Up to taking a subsequence, by \eqref{d_short} and \eqref{d_off}, we can assume that 
\begin{equation*}
d_j \to y \in M \setminus  \bigcup_{i=1}^k B(x_i,2r) .
\end{equation*}
Observe that \eqref{d_short} combined with the assumption implies that $[d_j] \in\langle [c_j^1],\dots, [c_j^k]\rangle$ for $j$ sufficiently large, which we simply assume to be the case from here on.

\smallskip

We have to distinguish the following two cases,
\begin{itemize}
\item[a)] the curve $d_j$ is non-separating, or
\item[b)] the curve $d_j$ is separating, i.e.\ $[d_j]=0$.
\end{itemize}

\smallskip

We start with case a).
In this case it follows from \cref{lem_top_sep}, that
\begin{equation*}
\Sigma_j \cap \left( M \setminus \bigcup_{i=1}^k B(x_i,r_j) \right) = \Sigma_j^1 \cup \Sigma_j^2,
\end{equation*}
where now $\Sigma_j^i$ are connected, disjoint minimal surfaces with
\begin{equation*}
\partial \Sigma_j^i \subset d_j \cup   \bigcup_{i=1}^k \partial B(x_i,r_j).
\end{equation*}
Since $d_j$ is non-separating in $\Sigma_j$,
it follows immediately that
\begin{equation} \label{eq_case_1_intersec}
\Sigma_j^i \cap \partial B(y,r_1) \neq \emptyset,
\end{equation}
holds for $i=1,2$ and for $j$ sufficiently large.
By the same arguments as in the proof of \cref{lem_no_necks} we may assume that $M$ is simply connected.
We now want to minimize area with boundary $d_j$ in $M \setminus  \cup_{i=1}^k B(x_i,r_j)$ instead of all of $M$.
In order to do so we first slightly modify the metric near $\cup_{i=1}^k \partial B(x_i,r_j)$ to obtain a mean convex domain.
Using a partition of unity we may simply choose a metric $g_j$ on $M \setminus B(x_i, r_j)$ that agrees with the original metric outside of $\cup_{i=1}^k B(x_i,2 r_j)$
and has mean convex boundary.
We can now solve the Plateau problem as before in
$(M \setminus  \cup_{i=1}^k B(x_i,r_j),g_j)$ with prescribed boundary $d_j$.
After passing to a subsequential limit we find a non-empty (thanks to \eqref{eq_case_1_intersec}) limit lamination in $(M \setminus \{x_1,\dots,x_k\},g)$. 
By stability, the limit lamination extends also across the set $\{x_1,\dots, x_k\}$ and we can argue as in the proof of \cref{lem_no_necks}.

For the remaining case b), we prove the stronger assertion that $f_j$ is uniformly bounded.
This handles case b) as follows.
If $f_j \leq C$, then
for $x \in M \setminus \cup_{i=1}^k B(x_i,2r)$, we find that
\begin{equation*}
l_j(x) \geq C^{-1} \dist(x , c_j^1 \cup \dots \cup c_j^k) \geq C^{-1}r
\end{equation*}
for $j$ sufficiently large, which contradicts \eqref{d_short}-\eqref{d_nc}.

In order to show that $f_j$ is uniformly bounded, we argue by contradiction and assume that 
\begin{equation} \label{f_unbd}
\liminf_{j \to \infty} \sup_{\Sigma_j} f_j \to \infty,
\end{equation}
which we simply assumed to be a full limit after taking another subsequence.
Note that $f_j \leq C_j$ for some constant $C_j >0$, since $\Sigma_j$ is a smooth and closed surface, therefore, we can pick
\footnote{Note that this is the standard selection procedure for such scales adapted to our situation.}
 $z_j \in \Sigma_j$, such that
\begin{equation*}
2f_j(z_j) \geq \sup_{\Sigma_j} f_j.
\end{equation*}

The assumption \eqref{f_unbd} implies that there is a loop $d_j$ based at $z_j$, that is non-contractible on scale $r_0$ 
such that
\begin{equation} \label{scale_comp}
\length(d_j) \leq o( \dist (z_j, c_j^1\cup \dots \cup c_j^k)).
\end{equation}
We can assume that any other loop $e_j$ based at $z_j$ that is non-contractible on scale $r_0$ 
has
\begin{equation} \label{scale_d}
\length(d_j)\leq 2\length(e_j).
\end{equation}

For $ z \in \Sigma_j \cap B(z_j, 2\length(d_j)))$, we find from \eqref{scale_comp} that 
\begin{equation*}
\begin{split}
\dist(z, c_j^1 \cup \dots \cup c_j^k) 
&\geq 
\dist (z_j, c_j^1 \cup \dots \cup c_j^k) - 2\length(d_j) 
\\
&\geq
\frac{1}{2} \dist (z_j, c_j^1 \cup \dots \cup c_j^k)
\end{split}
\end{equation*}
for $j$ sufficently large.
Therefore,  by the choice of $z_j$, we have 
\begin{equation} \label{min_scale}
4 l_j(z) \geq l_j(z_j)
\end{equation}
for any $z \in \Sigma_j \cap B(z_j, 2\length(d_j)))$.

Since $d_j$ is separating, we can write
\begin{equation*}
\Sigma \setminus d_j = \Sigma_j^1 \cup \Sigma_j^2
\end{equation*}
for connected minimal surfaces $\Sigma_j^i$ with boundary $d_j$.
We claim that 
\begin{equation} \label{eq_case_2_intersec}
\Sigma_j^i \cap \partial B(z_j,r_0) \neq \emptyset
\end{equation}
for $i=1,2$.

For ease of notation, we prove \eqref{eq_case_2_intersec} for $\Sigma_j^1$, the argument for $\Sigma_j^2$ is analogous.

We again distinguish two cases.
In the first case we assume that there is a simple closed curve $e_j \subset \Sigma_j^1$ with $0 \neq [e_j] \in \langle [c_j^1],\dots,[c_j^k]\rangle$.
This case
follows for homological reasons as follows.
We can pick a closed curve $g_j \subset \Sigma_j$ that intersects $e_j$ exactly once.
Since $d_j$ is separating and $e_j \subset \Sigma_j^1$, we can even choose $g_j$ with $g_j \subset \Sigma_j^1$.
But then $g_j$ has to intersect at least one of the curves $c_j^l$, which implies that 
\begin{equation*}
\Sigma_j^1 \cap B(x_l,r_j) \neq \emptyset.
\end{equation*}
Thanks to \eqref{scale_comp} this implies that
\begin{equation*}
\Sigma_j^1 \cap \partial B(z_j,\length(d_j)) \neq \emptyset.
\end{equation*}
Since $\partial \Sigma_j^1 = d_j \subset B(z_j,\length(d_j)/2)$, we then obtain \eqref{eq_case_2_intersec} from the convex hull property applied to $\Sigma_j^1$.

In the remaining case we have that
if $e_j \subset \Sigma_j^1$ is a simple closed curve with $\length(e_j) \leq l_0$, then $[e_j]=0$.
Moreover, the bound \eqref{min_scale} combined with the choice \eqref{scale_d} then implies that any simple closed curve $e_j \subset \Sigma_j^1 \cap B(z_j, 2\length(d_j))$ with $\length(e_j) \leq \length(d_j)/4 $ is contractible on scale $2 \length(d_j)$ at $z_j$.
At this point we are (for $\Sigma_j^1 \cap B(z_j, 2 \length(d_j))$) in the setting of \cref{lem_good_barrier} and can simply repeat the very same argument to
obtain
\begin{equation*}
\Sigma_j^1 \cap \partial B(z_j,2\length(d_j)) \neq \emptyset,
\end{equation*}
which in turn implies \eqref{eq_case_2_intersec} by the convex hull property using that $\partial \Sigma_j^1 \subset B(z_j, \length(d_j))$.

We can now once again argue as in the proof of \cref{lem_no_necks} and conclude the proposition.
\end{proof}

%%%%%%%%%%%%%%%%%%%%%%%
\subsection{Proof of the main result}
%%%%%%%%%%%%%%%%%%%%%%%
We now give the proof for the general case of our main result.

\begin{proof}[Proof of \cref{thm_intrinsic}]
For $\pi \colon \tilde M \to M$ the universal covering, consider
the surfaces $\hat \Sigma_j = \pi^{-1}(\Sigma_j)$
and denote
 $\mathcal{X}=\pi^{-1}(\{x_1,\dots,x_k\})$.
We can pass to a subsequential limit
\begin{equation*}
\hat \Sigma_j \to \cL  \ \ \text{in} \ \ C_{\rm loc}^{0,\alpha}(\tilde M \setminus \cS),
\end{equation*}
where clearly $\mathcal{X} \subset \cS$.
It follows from  \cref{mult_ulsc} that the surfaces are ULSC away from $\mathcal{X}$.
Moreover, thanks to \cref{mult_sing_set}, we can find a collapsed leaf $\Gamma \subset \cL$, which extends across
$\cS \setminus \mathcal{X}$ by \cref{prop_collapsed}.
Moreover, since this is stable, it also extends across the isolated points $\mathcal{X}$ to a complete, stable minimal surface,
which implies a contradiction as in the $k=1$ case.
\end{proof}

\end{document}